\newcommand\ten{\otimes}
\newcommand\eps{\epsilon}
\newcommand\CC{\mathrm{C}}
\newcommand\CCC{\mathrm{CC}}
\newcommand\GT{\mathrm{GT}}
\newcommand\Levi{\mathrm{Levi}}
\renewcommand\H{\mathrm{H}}
\newcommand\z{\mathrm{Z}}
\newcommand\HH{\mathrm{HH}}
\newcommand\Z{\mathbb{Z}}
\newcommand\Q{\mathbb{Q}}
\newcommand\bG{\mathbb{G}}
\newcommand\cB{\mathcal{B}}
\newcommand\cD{\mathcal{D}}
\newcommand\cP{\mathcal{P}}
\newcommand\cPer{\mathcal{P}\!\mathit{er}}
\renewcommand\O{\mathscr{O}}
\newcommand\sD{\mathscr{D}}
\newcommand\sL{\mathscr{L}}
\newcommand\sO{\mathscr{O}}
\newcommand\fX{\mathfrak{X}}
\newcommand\fY{\mathfrak{Y}}
\renewcommand\L{\Lambda}
\newcommand\g{\mathfrak{g}}
\renewcommand\hom{\mathscr{H}\!\mathit{om}}
\newcommand\cHom{\mathcal{H}\!\mathit{om}}
\newcommand\CAlg{\mathrm{CAlg}}
\newcommand\Ass{\mathrm{Ass}}
\newcommand\Hom{\mathrm{Hom}}
\newcommand\map{\mathrm{map}}
\newcommand\HHom{\underline{\mathrm{Hom}}}
\newcommand\cone{\mathrm{cone}}
\newcommand\dg{\mathrm{dg}}
\newcommand\per{\mathrm{per}}
\newcommand{\brh}{\llbracket \hbar \rrbracket}
\newcommand{\brhh}{\llbracket \hbar^2 \rrbracket}
\newcommand\Spec{\mathrm{Spec}\,}
\newcommand\Loc{\mathrm{Loc}}
\newcommand\Set{\mathrm{Set}}
\newcommand\Aff{\mathrm{Aff}}
\newcommand\Sp{\mathrm{Sp}}
\newcommand\PreSp{\mathrm{PreSp}}
\newcommand\Pol{\mathrm{Pol}}
\newcommand\Comp{\mathrm{Comp}}
\newcommand\nondeg{\mathrm{nondeg}}
\newcommand\ad{\mathrm{ad}}
\newcommand\Lim{\varprojlim}
\newcommand\ho{\mathrm{ho}\!}
\newcommand\abuts{\implies}
\newcommand\xra{\xrightarrow}
\newcommand\bt{\bullet}
\newcommand\by{\times}
\newcommand\mc{\mathrm{MC}}
\newcommand\mmc{\underline{\mathrm{MC}}}
\newcommand\Perf{\mathrm{Perf}}
\newcommand\Symm{\mathrm{Symm}}
\newcommand\et{\acute{\mathrm{e}}\mathrm{t}}
\newcommand\Tot{\mathrm{Tot}\,}
\newcommand\pd{\partial}
\newcommand\half{\frac{1}{2}}
\newcommand\gr{\mathrm{gr}}
\newcommand\red{\mathrm{red}}
\newcommand\DR{\mathrm{DR}}
\newcommand\op{\mathrm{opp}}
\newcommand\opp{\mathrm{opp}}
\newcommand\co{\colon\thinspace}
\newcommand\oR{\mathbf{R}}
\newcommand\oL{\mathbf{L}}
\newcommand\uleft\underleftarrow
\newcommand\uline\underline
\newcommand\uright\underrightarrow
\newtheorem{theorem}{Theorem}[section]
\newtheorem{proposition}[theorem]{Proposition}
\newtheorem{lemma}[theorem]{Lemma}
\newtheorem*{theorem*}{Theorem}
\newtheorem*{proposition*}{Proposition}
\newtheorem*{corollary*}{Corollary}
\newtheorem*{lemma*}{Lemma}
\newtheorem*{conjecture*}{Conjecture}
\theoremstyle{definition}
\newtheorem{definition}[theorem]{Definition}
\newtheorem*{definition*}{Definition}
\newtheorem*{notation*}{Notation}
\theoremstyle{remark}
\newtheorem{example}[theorem]{Example}
\newtheorem{examples}[theorem]{Examples}
\newtheorem{remark}[theorem]{Remark}
\newtheorem{remarks}[theorem]{Remarks}
\newtheorem{assumption}[theorem]{Assumption}
\newtheorem*{example*}{Example}
\newtheorem*{examples*}{Examples}
\newtheorem*{remark*}{Remark}
\newtheorem*{remarks*}{Remarks}
\newtheorem*{exercise*}{Exercise}
\newtheorem*{property*}{Property}
\newtheorem*{properties*}{Properties}
\begin{document}

\begin{abstract}
We prove that every $0$-shifted symplectic structure on a derived Artin $n$-stack   admits a curved $A_{\infty}$ deformation quantisation. The classical method of quantising smooth varieties via quantisations of affine space does not apply in this setting, so we develop a new approach. We construct a map from   DQ algebroid quantisations of unshifted symplectic structures on a  derived Artin $n$-stack to power series in  de Rham cohomology, depending only on a choice of Drinfeld associator. This gives an equivalence between even power series and  certain involutive  quantisations, which yield anti-involutive curved $A_{\infty}$ deformations of the dg category of perfect complexes. 
 In particular, there is a canonical quantisation associated to every  symplectic structure on such a stack, which agrees for smooth varieties with the Kontsevich--Tamarkin quantisation for even associators. 
\end{abstract}

\title[Quantisation for unshifted symplectic structures on stacks]
{Deformation quantisation for unshifted symplectic structures on derived Artin stacks}
\author{J.P.Pridham}
\thanks{This work was supported by  the Engineering and Physical Sciences Research Council [grant number EP/I004130/2].}



\maketitle

\section*{Introduction}

For $n >0$, existence of quantisations of $n$-shifted Poisson structures is a formality, following from the equivalence $E_{n+1}\simeq P_{n+1}$ of operads. Quantisations of positively shifted symplectic structures thus follow immediately from the equivalence in \cite{poisson, CPTVV} between symplectic and non-degenerate Poisson structures. In \cite{DQvanish}, quantisation for non-degenerate $(-1)$-shifted Poisson structures was established, and we now consider the $n=0$ case, fleshing out the details sketched in  \cite[\S \ref{DQvanish-nonnegsn}]{DQvanish}.

Beyond the setting of smooth Deligne--Mumford stacks, unshifted symplectic structures only arise on objects incorporating both stacky and derived structures, as non-degeneracy of the symplectic form implies that the cotangent complex must have both positive and negative terms. Examples of such symplectic derived stacks include the derived moduli stack of perfect complexes on an algebraic $K3$ surface, or the derived moduli stack of locally constant $G$-torsors on a compact oriented topological surface, for an algebraic  group $G$ equipped with a Killing form on its Lie algebra. In the latter example, the symplectic structure on the smooth locus is that of \cite{goldmanInvartFns}. 

The common feature in the construction of deformation quantisations for manifolds \cite{DeWildeLecomte,Fedosov, deligneDefFnsSymplectic,kontsevichPoisson,tamarkinOperadicKontsevichFormality} and for smooth algebraic varieties \cite{BezrukavnikovKaledin,kontsevichDQAlgVar,YekutieliDQAG,vdBerghGlobalDQ} is the reduction (\'etale) locally to affine space. For derived Artin stacks, this is not an option, so we develop a new approach to show  that all non-degenerate Poisson structures can be quantised even if the Hochschild complex is not formal. This works by a similar  mechanism to the quantisation of non-degenerate $(-1)$-shifted Poisson structures   in \cite{DQvanish}, combined with formality of the $E_2$ operad.

The proof in \cite{poisson} of the correspondence between $n$-shifted symplectic and non-degenerate Poisson structures relied on the existence, for all Poisson structures $\pi$, of a CDGA morphism $\mu(-,\pi)$ from the de Rham algebra to the algebra $T_{\pi}\widehat{\Pol}(X,n)$  of shifted polyvectors with differential twisted by $\pi$. In \cite{DQvanish}, this idea was extended to establish  the 
existence of quantisations for $(-1)$-shifted symplectic structures, with $\mu$ being an $A_{\infty}$-morphism from the de Rham algebra to the ring of differential operators.  

In order to adapt these constructions to  $0$-shifted symplectic structures, we replace polyvectors or differential operators with the Hochschild complex $\CCC^{\bt}_R(X)$ of a derived Artin stack $X$, defined in terms of a resolution by stacky CDGAs (commutative bidifferential bigraded algebras). Since this has an $E_2$-algebra structure, a choice $w$ of Levi decomposition for the Grothendieck--Teichm\"uller group gives it a $P_2$-algebra structure. Quantisations $\Delta$ are defined as certain Maurer--Cartan elements  $\Delta \in \CCC^{\bt}_R(X)\llbracket \hbar \rrbracket$; these give rise to curved deformations of the dg category of perfect complexes.

 Each quantisation $\Delta$ then defines a morphism $\mu_w(-, \Delta)$ from the de Rham complex $\DR(X)$ to $\CCC^{\bt}_R(X)\llbracket \hbar \rrbracket$ twisted by $\Delta$. In more detail, 
since $[\Delta,-]$ defines a derivation from $\sO_X$ to  $\CCC^{\bt}_R(X)\llbracket \hbar \rrbracket$, it determines a map $\Omega^1_X \to \CCC^{\bt}_R(X)\llbracket \hbar \rrbracket[1]$ and    $\mu_w(-,\Delta)$ is the resulting morphism of CDGAs.
This gives rise to a notion of compatibility between $E_1$ quantisations $\Delta$ and generalised pre-symplectic structures (power series $\omega$ of elements of the de Rham complex): we say that $\omega$ and $\Delta$ are $w$-compatible if
\[
 \mu_w(\omega, \Delta) \simeq \hbar^2  \frac{\pd \Delta}{\pd \hbar}. 
\]

Proposition \ref{QcompatP1} shows that every non-degenerate  quantisation $\Delta$ of a stacky CDGA $A$  has a unique $w$-compatible generalised pre-symplectic structure, thus giving us a map
\[
 Q\cP(A,0)^{\nondeg} \to \H^2(F^2\DR(A)) \by \hbar\H^2(F^1\DR(A))\by \hbar^2\H^1(\DR(A)) \llbracket \hbar\rrbracket
\]
on the space of non-degenerate $0$-shifted $E_1$ quantisations of $A$. 

Moreover, we have spaces $Q\cP(A,0)/G^{k+1}$ consisting of $E_1$ quantisations of order $k$, 
by which we mean Maurer--Cartan elements in $\prod_{j \ge 2} (F_j\CCC^{\bt}_R(A)/F_{j-k-1}) \hbar^{j-1}$, for $F$  the good truncation filtration in the Hochschild direction. Via induction on levels of the filtration, and an analysis of the associated DGLA obstruction theory,
 Proposition \ref{quantprop} then shows  that the resulting map
\[
 Q\cP(A,0)^{\nondeg} \to (Q\cP(A,0)^{\nondeg}/G^2) \by \hbar^2\H^2(\DR(A))\llbracket \hbar\rrbracket
\]
underlies an equivalence. Thus  quantisation reduces to a first order problem.

This first order problem is resolved by introducing a notion of self-duality. In  \cite{DQvanish}, self-dual quantisations were defined for line bundles $\sL$ with an involution $\sL \simeq \sL^{\vee}$ to the  Grothendieck--Verdier dual. The analogous notion in our setting is given by considering anti-involutive associative algebras and categories.   Explicitly, when $X$ is a smooth variety, a self-dual quantisation of $\O_X$ is an associative deformation
\[
 (\O_X\llbracket \hbar \rrbracket, \star_{\hbar}) 
\]
of $\O_X$ with $a\star_{-\hbar} b= b\star_{\hbar} a$; the explicit quantisation formula of \cite{kontsevichPoisson} satisfies this property. More generally, a self-dual quantisation of $X$  over $R$ leads to a curved $A_{\infty}$-category with $R\brh$-semilinear  anti-involution, deforming the dg category of perfect complexes on $X$.

Restricting to self-dual quantisations ensures that the first-order obstruction vanishes, leading to Theorem \ref{quantpropsd}, which shows that the equivalence class of self-dual quantisations of a given non-degenerate Poisson structure is parametrised by
\[
 \hbar^2\H^2(\DR(A))\llbracket \hbar^2\rrbracket,
\]
and in particular such quantisations always exist. Global versions of these results for derived Artin $N$-stacks are summarised in Theorem \ref{Artinthm}.

The structure of the paper is as follows.

In \S \ref{defsn} we recall the description from \cite{poisson} of commutative bidifferential bigraded algebras
as formal completions of derived $N$-stacks along derived affines, together with the complex of polyvectors $\widehat{\Pol}(A,0)$ on such objects, and the space $\cP(A,0)$ of Poisson structures. We then introduce a quantisation  $Q\widehat{\Pol}(A,0)$ of the complex of polyvectors, defined in terms of the Hochschild complex, and introduce an anti-involution of this complex whose fixed points give rise to self-dual quantisations. 

\S \ref{compatsn} contains the technical heart of the paper. After introducing generalised pre-symplectic structures as de Rham power series, and after recalling formality quasi-isomorphisms for the $E_2$ operad associated to Levi decompositions $w$ of the Grothendieck--Teichm\"uller group, we introduce the notion (Definition \ref{Qcompatdef}) of $w$-compatibility between quantisations and generalised pre-symplectic structures. The main results (Propositions \ref{QcompatP1},  \ref{quantprop} and Theorem \ref{quantpropsd}) then follow, establishing the existence of quantisations of non-degenerate unshifted Poisson (and hence symplectic) structures on stacky derived affines. Proposition \ref{cfKT} shows that for  Levi decompositions corresponding to even associators, constant power series correspond to Kontsevich--Tamarkin quantisations. 

In  \S \ref{stacksn}, these results are translated into the fully global setting of derived Artin $N$-stacks (Theorem \ref{Artinthm}). The approach precisely mimics that of \cite[\S\S \ref{poisson-DMsn}, \ref{poisson-Artinsn}]{poisson}, by establishing \'etale functoriality in an $\infty$-category setting and applying descent arguments. Proposition \ref{Perprop2} shows how $E_1$ quantisations in our sense give rise to curved $A_{\infty}$ deformations of the dg category of perfect complexes on a derived Artin $N$-stack.

I would like to thank the anonymous referee for many helpful comments.

\tableofcontents

\section{
Quantisation for stacky thickenings of derived affine schemes}\label{defsn}


\subsection{Stacky thickenings of derived affines}
 
We now recall some definitions and lemmas from \cite[\S \ref{poisson-Artinsn}]{poisson}, as summarised in \cite[\S \ref{DQvanish-bicdgasn}]{DQvanish}. By default, we will  regard the  CDGAs in derived algebraic geometry   as chain complexes $\ldots \xra{\delta} A_1 \xra{\delta} A_0 \xra{\delta} \ldots$ rather than cochain complexes --- this will enable us to distinguish easily between derived (chain) and stacky (cochain) structures.  

\begin{definition}
A stacky CDGA is  a chain cochain complex $A^{\bt}_{\bt}$ equipped with a commutative product $A\ten A \to A$ and unit $\Q \to A$.  Given a chain  CDGA $R$, a stacky CDGA over $R$ is then a morphism $R \to A$ of stacky CDGAs. We write $DGdg\CAlg(R)$ for the category of  stacky CDGAs over $R$, and $DG^+dg\CAlg(R)$ for the full subcategory consisting of objects $A$ concentrated in non-negative cochain degrees.
\end{definition}
As explained in \cite[Remark \ref{poisson-cfCPTVV}]{poisson}, these correspond to the ``graded mixed cdgas'' of \cite{CPTVV} (but beware that the latter do not have mixed differentials).

When working with chain cochain complexes $V^{\bt}_{\bt}$, we will usually denote the chain differential by $\delta \co V^i_j \to V^i_{j-1}$, and the cochain differential by $\pd \co V^i_j \to V^{i+1}_j$.
Readers interested only in DM (as opposed to Artin) stacks may ignore the stacky part of the structure and consider only chain CDGAs  $A_{\bt}= A^0_{\bt}$ throughout this section.

\begin{example}\label{DstarBGex}
We now recall an important example  of a class of stacky CDGAs from \cite[Example \ref{poisson-DstarBG}]{poisson}.  
 Given a Lie algebra $\g$ of finite rank acting as derivations on a derived affine scheme $Y$, we write $O([Y/\g])$ for the stacky CDGA given by the
 Chevalley--Eilenberg double complex 
\[
  O(Y) \xra{\pd} O(Y)\ten \g^{\vee} \xra{\pd} O(Y)\ten \Lambda^2\g^{\vee}\xra{\pd} \ldots 
\]
of $\g$ with coefficients in the chain $\g$-module $O(Y)$.

When the action of $\g$ on $Y$ is induced by the action of an affine group scheme $G$ with Lie algebra $\g$, the stacky CDGA can recover the relative de Rham stack $[Y/\g]$ of $Y$ over $[Y/G]$; explicitly, the stack $[Y/\g]$ is the quotient of $Y$ by the action of the group sheaf $B \mapsto \exp(\g\ten \ker(B\to B^{\red} ))$. 
Then  \cite[Example \ref{poisson-DstarBG}]{poisson} gives a formally \'etale  simplicial resolution of $[Y/G]$ in terms of the functors $[Y\by G^n/\g^{n+1}]$. 
\end{example}

\begin{definition}
 Say that a morphism $U \to V$ of chain cochain complexes is a levelwise quasi-isomorphism if $U^i \to V^i$ is a quasi-isomorphism for all $i \in \Z$. Say that a morphism of stacky CDGAs is a levelwise quasi-isomorphism if the underlying morphism of chain cochain complexes is so.
\end{definition}

The following is \cite[Lemma \ref{poisson-bicdgamodel}]{poisson}:
\begin{lemma}\label{bicdgamodel}
There is a cofibrantly generated model structure on stacky CDGAs over $R$ in which fibrations are surjections and weak equivalences are levelwise quasi-isomorphisms. 
\end{lemma}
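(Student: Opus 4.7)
The plan is to apply Kan's transfer theorem to the free--forgetful adjunction
\[
\Sym_R \colon \text{chain cochain } R\text{-modules} \rightleftarrows DGdg\CAlg(R) \colon U
\]
from a suitable model structure on the source. Accordingly, the first step is to equip chain cochain $R$-modules with the levelwise projective model structure: a morphism $V \to W$ is a weak equivalence (resp.\ fibration) if and only if $V^i \to W^i$ is a quasi-isomorphism (resp.\ surjection) of chain complexes for every $i \in \Z$. This is just the $\Z$-indexed product of the standard projective model structure on chain $R$-complexes; it is cofibrantly generated with generators $\{S^{n,i} \into D^{n,i}\}$ and generating trivial cofibrations $\{0 \into D^{n,i}\}$, where $D^{n,i}$ and $S^{n,i}$ denote the usual disc and sphere placed in a single cochain degree $i$.

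Next I would invoke Kan's transfer theorem along $\Sym_R \dashv U$. Smallness is automatic, since $U$ preserves filtered colimits and the generators are small in chain cochain modules. The one serious hypothesis is acyclicity: for every generating trivial cofibration $0 \into D^{n,i}$ and every stacky CDGA $A$, the pushout $A \to A \otimes_R \Sym_R(D^{n,i})$ must be a levelwise quasi-isomorphism.

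Since $D^{n,i}$ sits in cochain degree $i$, its $k$-th symmetric power $\Sym^k(D^{n,i})$ lies in cochain degree $ki$, so the cochain-degree-$m$ component of $A \otimes_R \Sym_R(D^{n,i})$ decomposes as a direct sum of chain complexes $\bigoplus_{k \geq 0} A^{m-ki} \otimes_R \Sym^k(D^{n,i})$. Working over a field of characteristic zero, $\Sym^k$ preserves contractibility for $k \geq 1$, so every summand with $k \geq 1$ is acyclic; the $k=0$ inclusion $A^m \into (A \otimes_R \Sym_R(D^{n,i}))^m$ is therefore a quasi-isomorphism in every cochain degree. Closure of levelwise quasi-isomorphisms under pushouts, transfinite composition, and retracts then gives the required acyclicity for the whole saturation of $\Sym_R J$.

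The main obstacle is precisely this last step in the presence of unbounded cochain gradings, since free symmetric algebras a priori mix cochain degrees without bound; the resolution is that by taking generating trivial cofibrations concentrated in a single cochain degree, each cochain degree of the pushout receives only a direct sum of independent acyclic pieces, which remains acyclic. Once acyclicity is verified, Kan's transfer theorem yields the cofibrantly generated model structure on $DGdg\CAlg(R)$ with fibrations the levelwise surjections and weak equivalences the levelwise quasi-isomorphisms, as stated.
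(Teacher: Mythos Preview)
Your overall strategy --- transfer along $\Sym_R \dashv U$ from a projective-type model structure on chain cochain $R$-modules --- is the natural one, and the paper itself gives no argument here, simply citing the result from an earlier paper. However, there is a genuine gap in your choice of generators.

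You take $D^{n,i}$ and $S^{n,i}$ to be the usual chain disc and sphere ``placed in a single cochain degree $i$'', so in particular $\pd = 0$ on them. But a morphism $g\co D^{n,i} \to V$ of chain cochain complexes must then satisfy $\pd_V \circ g = g \circ \pd_{D^{n,i}} = 0$, i.e.\ $g$ lands in $\ker(\pd_V\co V^i \to V^{i+1})$. Consequently the right lifting property against $0 \to D^{n,i}$ only tests surjectivity of $\ker \pd_V \to \ker \pd_W$ in each bidegree, not of $V^i_n \to W^i_n$ itself. Your proposed generating sets therefore fail to characterise the fibrations and trivial fibrations you want; they would instead produce a different (and wrong) model structure. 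This also undermines your acyclicity computation, which relies on $\Sym^k(D^{n,i})$ being concentrated in a single cochain degree.

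The fix is to use bi-discs. The object corepresenting $V \mapsto V^i_n$ in chain cochain complexes is the free object on one generator in bidegree $(i,n)$, namely the tensor product of the cochain disc in degrees $i,i{+}1$ with the chain disc $D_n$; similarly, tensoring the cochain disc with the chain sphere $S_n$ corepresents $V \mapsto \z_n V^i$. With these as generators of $J$ and $I$, the lifting properties do detect surjections and levelwise trivial fibrations. Your acyclicity argument then survives with only cosmetic changes: each bi-disc is levelwise contractible (it is a contractible chain complex of free $R$-modules in each of the two cochain degrees it occupies), hence so is every $\Sym^k$ for $k \ge 1$ as a retract of a tensor power, and therefore $A \to A \ten_R \Sym_R(\text{bi-disc})$ is, in each cochain degree, the inclusion of $A^m$ into $A^m$ direct sum a contractible complex. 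Since these maps are injections with levelwise contractible cokernel, they are closed under transfinite composition, and the transfer goes through.
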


There is a denormalisation functor $D$ from non-negatively graded CDGAs to cosimplicial algebras, with 
 left adjoint $D^*$ as in \cite[Definition \ref{ddt1-nabla}]{ddt1}. 
Given a cosimplicial chain CDGA $A$, $D^*A$ is then a stacky CDGA in non-negative cochain degrees. By  \cite[Lemma \ref{poisson-Dstarlemma}]{poisson}, $D^*$ is a left Quillen functor from the Reedy model structure on cosimplicial chain CDGAs to the model structure of Lemma \ref{bicdgamodel}.

Since   $DA$ is a pro-nilpotent extension of $A^0$, when $\H_{<0}(A)=0$ we think of the simplicial hypersheaf  $\oR \Spec DA$ as a stacky derived thickening of the derived affine scheme $\oR \Spec A^0$.  

\begin{definition}
 Given a chain cochain complex $V$, define the cochain complex $\hat{\Tot} V \subset \Tot^{\Pi}V$ by
\[
(\hat{\Tot} V)^m := (\bigoplus_{i < 0} V^i_{i-m}) \oplus (\prod_{i\ge 0}   V^i_{i-m})
\]
with differential $\pd \pm \delta$. 
\end{definition}
The key property of the semi-infinite total complex $\hat{\Tot}$ is that it sends levelwise quasi-isomorphisms in the chain direction to quasi-isomorphisms; the same is not true in general of the sum and product total complexes $\Tot, \Tot^{\Pi}$, cf. \cite[\S 5.6]{W}.
The functor $\hat{\Tot}$ is referred to as Tate realisation in \cite{CPTVV}.

\begin{definition}
 Given a stacky CDGA $A$ and $A$-modules $M,N$ in chain cochain complexes, we define  internal $\Hom$s
$\cHom_A(M,N)$  by
\[
 \cHom_A(M,N)^i_j=  \Hom_{A^{\#}_{\#}}(M^{\#}_{\#},N^{\#[i]}_{\#[j]}),
\]
with differentials  $\pd f:= \pd_N \circ f \pm f \circ \pd_M$ and  $\delta f:= \delta_N \circ f \pm f \circ \delta_M$,
where $V^{\#}_{\#}$ denotes the bigraded vector space underlying a chain cochain complex $V$. 

We then define the  $\Hom$ complex $\hat{\HHom}_A(M,N)$ by
\[
 \hat{\HHom}_A(M,N):= \hat{\Tot} \cHom_A(M,N).
\]
\end{definition}
Note that  there is a multiplication $\hat{\HHom}_A(M,N)\ten \hat{\HHom}_A(N,P)\to \hat{\HHom}_A(M,P)$  (the same is not true for $\Tot^{\Pi} \cHom_A(M,N)$ in general).

Writing $\Omega^1_A:= \Omega^1_{A/R}$, we have:
\begin{definition}\label{hfetdef}
 A morphism  $A \to B$ in $DG^+dg\CAlg(R)$ is said to be  homotopy formally \'etale when the map
\[
 \{\Tot \sigma^{\le q} (\oL\Omega_{A}^1\ten_{A}^{\oL}B^0)\}_q \to \{\Tot \sigma^{\le q}(\oL\Omega_{B}^1\ten_B^{\oL}B^0)\}_q
\]
is a pro-quasi-isomorphism (i.e. an essentially levelwise quasi-isomorphism in the sense of \cite[\S 2.1]{isaksenStrict}), where $\sigma^{\le q}$ denotes the brutal cotruncation
\[
 (\sigma^{\le q}M)^i := \begin{cases} 
                         M^i & i \ge q, \\ 0 & i<q.
                        \end{cases}
\]
\end{definition}

Combining \cite[Proposition \ref{poisson-replaceprop}]{poisson} with  \cite[Theorem \ref{stacks2-bigthm} and Corollary \ref{stacks2-Dequivcor}]{stacks2}, every strongly quasi-compact derived Artin $N$-stack over $R$ can be resolved by a derived DM hypergroupoid (a form of homotopy formally \'etale cosimplicial diagram) in $DG^+dg\CAlg(R)$.

\subsection{Polyvectors}\label{polyvectorsn}

We now fix a chain CDGA $R$  over $\Q$.

\begin{assumption}\label{biCDGAprops}
 As in \cite[\S  \ref{poisson-bipoisssn}]{poisson}, we now assume that    $A \in DG^+dg\CAlg(R)$ has the following properties:
\begin{enumerate}
 \item for any cofibrant replacement $\tilde{A}\to A$ in the model structure of Lemma \ref{bicdgamodel}, the morphism $\Omega^1_{\tilde{A}}\to \Omega^1_{A}$ is a levelwise quasi-isomorphism,
\item  the $A^{\#}$-module $(\Omega^1_{A})^{\#}$ in   graded chain complexes is cofibrant (i.e. it has the left lifting property with respect to all surjections of $A^{\#}$-modules in graded chain complexes),
\item there exists $N$ for which the chain complexes $(\Omega^1_{A}\ten_AA^0)^i $ are acyclic for all $i >N$.
\end{enumerate}
\end{assumption}

Of particular interest for us is that these conditions are satisfied when  $A= D^*O(X)$ for derived Artin $N$-hypergroupoids $X$. The following is adapted from \cite[Definition \ref{poisson-bipoldef}]{poisson} along the lines of \cite[Definition \ref{DQvanish-poldef}]{DQvanish}, with the introduction of a dummy variable $\hbar$ of cohomological degree $0$ to assist comparison with quantisation constructions.

%
%

\begin{definition}\label{poldef}
Define the complex of $0$-shifted polyvector fields (or strictly speaking, multiderivations) on $A$ by
\[
 \widehat{\Pol}(A,0):= \prod_{p \ge 0}\hat{\HHom}_A(\Omega^p_{A},A)\hbar^{p-1}[-p]. 
\]
with graded-commutative  multiplication $(a,b)\mapsto ab$ on $ \hbar\widehat{\Pol}(A,0)$  following the usual conventions for symmetric powers.  

The Lie bracket on $\hat{\Hom}_A(\Omega^1_{A},A)$ then extends to give a bracket (the Schouten--Nijenhuis bracket)
\[
[-,-] \co \widehat{\Pol}(A,0)\by \widehat{\Pol}(A,0)\to \widehat{\Pol}(A,0)[-1],
\]
determined by the property that it is a bi-derivation with respect to the multiplication operation. 

Thus  $\hbar \widehat{\Pol}(A,0)$ has the natural structure of a $P_{2}$-algebra (i.e. a Gerstenhaber algebra), and  $\widehat{\Pol}(A,0)[1]$ is a differential graded Lie algebra (DGLA) over $R$.
\end{definition}

%

\begin{definition}\label{Fdef}
Define a decreasing filtration $F$ on  $\widehat{\Pol}(A,0)$ by 
\[
 F^i\widehat{\Pol}(A,0):= \prod_{j \ge i}\hat{\HHom}_A(\Omega^j_{A},A)\hbar^{j-1}[-j];
\]
this has the properties that $\widehat{\Pol}(A,0)= \Lim_i \widehat{\Pol}(A,0)/F^i$, with $[F^i,F^j] \subset F^{i+j-1}$, $ (\pd \pm \delta) F^i \subset F^i$, and $ (\hbar F^i) (\hbar F^j) \subset \hbar F^{i+j}$.
\end{definition}

Observe that this filtration makes $F^2\widehat{\Pol}(A,n)[1]$ into a pro-nilpotent DGLA.


\begin{definition}\label{Tpoldef0}
 Define the tangent space of polyvectors by 
\[
 T\widehat{\Pol}(A,0):= \widehat{\Pol}(A,0)\oplus \widehat{\Pol}(A,0)\hbar\eps,
\]
for $\eps$ of degree $0$ with $\eps^2=0$. Then $T\widehat{\Pol}(A,0)[1] $  is a DGLA with Lie bracket  given by $ [u+v\eps, x+y\eps]= [u,x]+ [u,y]\eps + [v,x]\eps$.
\end{definition}

%
%
%


\begin{definition}\label{Tpoldef}
Given a Maurer--Cartan element $\pi \in  \mc(F^2\widehat{\Pol}(A,0)[1]) $, define 
\[
 T_{\pi}\widehat{\Pol}(A,0):= \prod_{p \ge 0}\hat{\HHom}_A(\Omega^p_{A},A)\hbar^{p}[-p],
\]
with derivation $(\pd \pm \delta) + [\pi,-]$ (necessarily square-zero by the Maurer--Cartan conditions). 

The product on polyvectors makes this a  CDGA, and it inherits the filtration $F$ from $\hat{\Pol}$ (so, ignoring the differentials, we have $ F^iT_{\pi}\widehat{\Pol}(A,0) \cong \hbar F^i\hat{\Pol}(A,0)$). 

Given $\pi \in \mc(F^2\widehat{\Pol}(A,0)[1]/F^p)$, we define $T_{\pi}\widehat{\Pol}(A,0)/F^p$ similarly. This is a CDGA  because $F^i\cdot F^j \subset F^{i+j}$.
\end{definition}

Regarding $ T_{\pi}\widehat{\Pol}(A,0)[1]$ as an abelian DGLA, observe that $\mc(T_{\pi}\widehat{\Pol}(A,0)[1])$ is just the fibre of $\mc( T\widehat{\Pol}(A,0)[1]) \to  \mc(\widehat{\Pol}(A,0)[1])$ over $\pi$. 

%

\subsection{The Hochschild complex of a stacky CDGA}\label{HHsn}

\begin{definition}\label{HHdef}
For an $A$-module $M$ in chain cochain complexes, 
 we define the cohomological Hochschild complex $\CCC^{\bt}_R(A,M)$  over $R$ as we would for dg algebras, but using the $\Hom$-complexes $\hat{\HHom}$. Thus $\CCC^{\bt}_R(A,M)$ is the product total complex of a double complex $\uline{\CCC}^{\bt}_R(A,M)$ given by  
\[
 \uline{\CCC}^n_R(A,M)= \hat{\HHom}_R(A^{\ten_R n}, M),
\]
with Hochschild differential $b \co \uline{\CCC}^{n-1} \to \uline{\CCC}^{n}$ given by
\begin{align*}
 (b f)(a_1, \ldots , a_n) = &a_1 f(a_2, \ldots, a_n)\\
 &+ \sum_{i=1}^{n-1}(-1)^i f(a_1, \ldots, a_{i-1}, a_ia_{i+1}, a_{i+2}, \ldots, a_n)\\
&+ (-1)^n f(a_1, \ldots, a_{n-1})a_n.
\end{align*} 

There is also a quasi-isomorphic normalised version $N_c\uline{\CCC}^{\bt}_R(A,M)$, given by the subspaces of functions $f$ with $f(a_1, \ldots, a_{i-1}, 1,a_i,  \ldots, a_n)=0$ for all $i$. 

We define increasing filtrations $F$ on $\uline{\CCC}^{\bt}_R(A,M)$ and $\CCC^{\bt}_R(A,M)$ by good truncation in the Hochschild direction, so $F_p \CCC^{\bt}_R(A,M) \subset \CCC^{\bt}_R(A,M)$ is the subspace
\[
\prod_{i=0}^{p-1} \uline{\CCC}^i_R(A,M)[-i] \by \ker(b \co\uline{\CCC}^p_R(A,M) \to \uline{\CCC}^{p+1}_R(A,M))[-p].
\]

We simply write $\CCC^{\bt}_R(A)$ and $\uline{\CCC}^{\bt}_R(A)$ for $\CCC^{\bt}_R(A,A)$ and $\uline{\CCC}^{\bt}_R(A,A)$.
\end{definition}

\begin{lemma}\label{gradedHH}
 There is a natural brace algebra structure on $\CCC^{\bt}_R(A)$ over $R$, compatible with the filtration $F$. In particular, $\CCC^{\bt}_R(A)[1]$ is a filtered DGLA over $R$.  On the associated graded brace algebra $\gr^F\CCC^{\bt}_R(A) $, the Lie bracket and higher braces vanish, and there is a surjective quasi-isomorphism
\[
\gr^F\CCC^{\bt}_R(A) \to \hat{\Tot}(\HH^*_{R_{\#}}(A^{\#}_{\#}), \pd \pm \delta)
\]
of brace algebras, where we set all the braces to be $0$ on $\HH^*$.
\end{lemma}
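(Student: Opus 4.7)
The plan is to install the classical brace-algebra structure of Gerstenhaber--Getzler--Jones--Voronov on $\CCC^\bt_R(A)$ and then analyse its behaviour under the good-truncation filtration $F$.

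First, for the brace structure itself, I would define brace operations $x\{y_1,\ldots,y_k\}$ and cup product $x\cup y$ by the usual insertion formulas. These make sense in the chain cochain setting because $\hat{\HHom}$ admits composition, as noted immediately after its definition. The Getzler--Jones brace identities, including
\[
b(x\{y\})\pm (bx)\{y\}\pm x\{by\}=\pm\bigl(x\cup y-(-1)^{|x||y|}y\cup x\bigr)
\]
together with its higher-arity generalisations, are purely formal consequences of the multilinear structure and hence hold verbatim. Compatibility with the internal differentials $\pd,\delta$ coming from $A$ is automatic, so $\CCC^\bt_R(A)$ inherits a brace-algebra structure over $R$, and $\CCC^\bt_R(A)[1]$ is a DGLA.

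Second, for filtration compatibility, I would observe that $F_p$ is a subcomplex since good truncation in the $b$-direction is preserved by $\pd$ and $\delta$ (which commute with $b$). The Hochschild-arity counts, namely $|x\cup y|=|x|+|y|$ and $|x\{y_1,\ldots,y_k\}|=|x|+\sum|y_i|-k$, combined with the brace identities show that these operations respect $F$: when applied to inputs whose top-arity components are $b$-cocycles, the resulting top-arity component is again $b$-closed, landing in the prescribed $F_\ell$. This yields the filtered brace/DGLA structure.

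Third, for the associated graded, the standard identification of good-truncation associated graded with cohomology gives that $\gr^F_p\CCC^\bt_R(A)$ is quasi-isomorphic to $\HH^p_{R_\#}(A^\#_\#)[-p]$ equipped with the descended $\pd\pm\delta$ differentials. Assembling over all arities via $\hat{\Tot}$ yields the target of the surjective quasi-isomorphism; the canonical projection sending an element of $F_p$ to the $b$-cohomology class of its arity-$p$ component is the required map. The vanishing of Lie bracket and higher braces on $\gr^F$ then follows from the arity count: for $k\geq 1$ the brace $x\{y_1,\ldots,y_k\}$ strictly decreases total arity compared to cup product, and the brace identities exhibit the ostensible top-arity component of such a brace of $b$-cocycles as a $b$-coboundary, forcing the induced operation on $\gr^F$ to vanish. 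Only the cup product, which preserves arity, survives, giving the graded-commutative cup product on $\HH^*$.

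The main obstacle is the vanishing argument in the third step: arity counting alone leaves room for the brace operations to land non-trivially in $\gr^F$, and the genuine vanishing requires carefully invoking the brace identities to exhibit the top-arity output of each brace applied to $b$-cocycles as a $b$-coboundary. This is where one needs the full brace-algebra relations, transported from the classical dg-algebra setting to chain-cochain complexes via the formal multilinear structure of $\hat{\HHom}$.
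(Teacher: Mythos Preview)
Your overall strategy matches the paper's, but there is a genuine gap in steps 2 and 3 concerning which operations preserve $b$-closedness and how vanishing on $\gr^F$ is actually obtained.

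In step 2 you assert that the top-arity component of a brace of $b$-cocycles is again $b$-closed. This is false already for the single brace: the very identity you wrote down gives $b(x\{y\}) = \pm(x\cup y \mp (-1)^{|x||y|}y\cup x)$ when $bx=by=0$, so $x\{y\}$ is typically not $b$-closed. What the paper uses instead is the pure arity count: the brace $x\{y_1,\ldots,y_n\}$ has Hochschild arity $p+r-n$, strictly less than $p+r-n+1$, so it lies in $F_{p+r+1-n}$ with no cocycle condition required at all. Since $F_{p+r+1-n}\subset F_{p+r}$, this already gives the filtered brace structure. For the Lie bracket one separately uses that $b$ is a derivation for it (the symmetric terms in the brace identity cancel upon antisymmetrisation), yielding the sharper inclusion $[F_p,F_q] \subset F_{p+q-1}$.

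In step 3 your claim that the top-arity component of a brace of $b$-cocycles is a $b$-coboundary is likewise false: the Lie bracket of $b$-cocycles induces the Gerstenhaber bracket on $\HH^*$, which is nonzero in general, so $[x,y]$ is not a coboundary. The paper's argument avoids this. With the multiplicative convention (all operations mapping $\gr^F_p \otimes \bigotimes_i \gr^F_{r_i} \to \gr^F_{p+r}$), an operation vanishes on $\gr^F$ precisely when its image lies in $F_{p+r-1}$. For braces with $n\geq 2$ this is automatic from arity, since $p+r-n \leq p+r-2$. For the Lie bracket it holds because $[x,y]$ is a $b$-\emph{cocycle} of arity $p+q-1$, hence already lies in $F_{p+q-1}$. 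Note that the single brace itself does \emph{not} vanish on $\gr^F$; the statement only claims vanishing of the bracket and the higher braces. The surjection $\gr^F \to \HH^*$ then kills the residual single brace simply because any operation of negative cohomological degree must vanish on a quotient concentrated in a single Hochschild degree, which is the paper's final observation.
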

\begin{proof}
 As in  \cite[\S 3]{voronovHtpyGerstenhaber}, there is a brace algebra structure on $\CCC^{\bt}_R(A)$, with cup product $\cdot$ of cohomological degree $0$ and brace operations $(f, g_1, \ldots , g_n) \mapsto \{f\}\{g_1, \ldots, g_n\}$ of cohomological degree $-n$. Writing $[f,g] := \{f\}\{g\} - (-1)^{\deg f \deg g} \{g\}\{f\}$ defines a Lie bracket of degree $-1$, making $ \CCC^n_R(A)[1]$ into a DGLA. 

Compatibility of $b$ with the bracket then implies that $[F_p, F_q] \subset F_{p+q-1}$, and degree considerations also give 
\[
 F_p \cdot F_q \subset F_{p+q}, \quad \{F_p\}\{F_{r_1}, \ldots, F_{r_n}\} \subset F_{p+ r+1-n},
\]
where $r= \sum r_i$. Since $F_{p+ r+1-n}\subset F_{p+r}$, this ensures that $(\CCC^{\bt}_R(A),F)$ is a filtered brace algebra; the bracket vanishes on $\gr^F$, as do the braces for $n \ge 2$. 

Since $F$ is defined as good truncation in the Hochschild direction, Hochschild cohomology $\HH^*$ is automatically a quasi-isomorphic quotient  of $\gr^F$. Any operation of negative degree necessarily vanishes on this quotient, so the quotient map is a brace algebra morphism. 
\end{proof}


\begin{lemma}\label{involutiveHH}
There is an involutive   map $i \co \CCC^{\bt}_R(A)[1] \to \CCC^{\bt}_R(A)[1]$ of DGLAs given by
\[
 i(f)(a_1, \ldots, a_m) = - (-1)^{\sum_{i<j}  \deg a_i \deg a_j} (-1)^{m(m+1)/2}f(a_m, \ldots , a_1).
\]

This involution corresponds under the HKR isomorphism to the involution of $\widehat{\Pol}(A,0)$ which acts on  $\hat{\HHom}_A(\Omega^p_{A},A)$ as scalar multiplication by $(-1)^{p-1}$.
\end{lemma}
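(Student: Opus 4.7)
The strategy divides into two parts: verifying that $i$ is a well-defined involutive morphism of DGLAs, and identifying its action under the HKR map.

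For the first part, observe that $i$ coincides, up to its overall sign of $-1$, with the classical reverse-arguments map $\phi \co \CCC^{\bt}_R(A) \to \CCC^{\bt}_R(A^{\op})$, adapted to the chain cochain setting via $\hat{\HHom}$. Graded commutativity of $A$ gives $A = A^{\op}$, so $\phi$ is naturally a self-map of $\CCC^{\bt}_R(A)$. Involutivity $i^2 = \id$ is then a direct sign count: reversing twice is the identity, the prefactor $-(-1)^{m(m+1)/2}$ squares to $+1$, and the Koszul reversal sign squares to $+1$. Commutativity with $\pd$ and $\delta$ follows at once from the definitions of these differentials on $\hat{\HHom}$. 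Commutativity with the Hochschild differential $b$ is the standard content of the Hochschild--opposite isomorphism, which in this setting reduces to a direct verification using graded commutativity of $A$ to identify $a \cdot_{\op} b = a \cdot b$.

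The main technical step is showing that $i$ preserves the Gerstenhaber bracket on the shift $\CCC^{\bt}_R(A)[1]$. The unshifted reverse-arguments map $\phi$ is well known to act as an antimorphism on the Gerstenhaber bracket, in the sense that $\phi[f,g] = -[\phi f, \phi g]$ up to the signs arising from graded reversal. The overall sign of $-1$ in the definition of $i$ is precisely what is needed to convert this antimorphism into an honest Lie morphism on the shifted DGLA: after the shift by $[1]$, the sign accounts for the interchange between the two possible sign conventions for the bracket. I expect this to be the most delicate part of the argument, but it is a purely elementary sign computation, entirely determined by the given formula.

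For the HKR identification, pass to associated graded along the filtration $F$ and use Lemma~\ref{gradedHH} to identify $\gr^F\CCC^{\bt}_R(A)$ quasi-isomorphically with Hochschild cohomology. The HKR embedding sends a $p$-multiderivation $v \in \hat{\HHom}_A(\Omega^p_{A/R},A)$ to the alternating $p$-cochain
\[
\tilde v(a_1, \ldots, a_p) \;=\; v(\pd a_1 \wedge \cdots \wedge \pd a_p).
\]
Reversing the arguments of an alternating cochain contributes the Koszul sign $(-1)^{\sum_{i<j}\deg a_i \deg a_j}$ together with the sign of the reversing permutation $(-1)^{p(p-1)/2}$. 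The Koszul sign cancels the Koszul factor in the definition of $i$, leaving a total coefficient
\[
-(-1)^{p(p+1)/2} \cdot (-1)^{p(p-1)/2} \;=\; -(-1)^{p^2} \;=\; (-1)^{p-1},
\]
which gives the claimed action on $\hat{\HHom}_A(\Omega^p_{A/R},A)$.
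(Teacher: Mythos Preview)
Your proposal is correct, and the HKR computation is line-for-line the paper's own argument: the same sign manipulation $-(-1)^{p(p+1)/2}\cdot(-1)^{p(p-1)/2}=-(-1)^{p}=(-1)^{p-1}$. For the first assertion the paper simply cites \cite[\S 2.1]{braunInvolutive} (with the trivial involution on $A$) rather than verifying the DGLA compatibility directly, so your sketch is doing more than the paper does. Two small points: your phrasing of the bracket step via ``$\phi[f,g]=-[\phi f,\phi g]$'' is a little loose, since for a graded-antisymmetric bracket this is the same as being a Lie morphism and does not in itself explain the role of the global $-1$; the cleaner bookkeeping is at the level of the brace operation $\{f\}\{g\}$, where reversal genuinely introduces the extra sign that the $-1$ then absorbs. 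Also, in your HKR formula write $da_i$ rather than $\pd a_i$: in this paper $\pd$ is reserved for the cochain differential on $A$, with $d$ the de~Rham differential.
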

\begin{proof}
The first statement is proved in  \cite[\S 2.1]{braunInvolutive}, taking the trivial involution on $A$.
For the second statement, given $\phi \in \hat{\HHom}_A(\Omega^p_{A},A)$, the corresponding element $f$ of $\CCC^p(A)$ is given by $f(a_1, \ldots, a_p):= \phi(da_1\wedge \ldots \wedge da_p)$, and then
\begin{align*}
i(f)(a_1, \ldots, a_p)&= - (-1)^{\sum_{i<j}  \deg a_i \deg a_j} (-1)^{p(p+1)/2}\phi( da_p\wedge \ldots \wedge da_1)\\
&= -(-1)^{p(p+1)/2}(-1)^{p(p-1)/2} \phi(da_1\wedge \ldots \wedge da_p)\\
&= -(-1)^{p} f(a_1, \ldots, a_p).
\end{align*}
\end{proof}

\subsection{Quantised $0$-shifted polyvectors and quantisations}

\begin{definition}\label{qpoldef}
Define the complex of quantised $0$-shifted polyvector fields  on $A$ by
\[
 Q\widehat{\Pol}(A,0):= \prod_{p \ge 0}F_p \CCC^{\bt}_R(A)\hbar^{p-1}. 
\]
\end{definition}

Properties of the filtration $F$ from Lemma \ref{gradedHH}  ensure that $Q\widehat{\Pol}(A,0)[1]$ is a DGLA.

\begin{definition}\label{QFdef}
Define a decreasing filtration $\tilde{F}$ on  $Q\widehat{\Pol}(A,0)$  by the subcomplexes 
\begin{align*}
 \tilde{F}^iQ\widehat{\Pol}(A,0)&:= \prod_{j \ge i} F_j\CCC^{\bt}_R(A)\hbar^{j-1}.
\end{align*}
 \end{definition}
This filtration is complete and Hausdorff,   with $[\tilde{F}^i,\tilde{F}^j] \subset \tilde{F}^{i+j-1}$.
In particular,  this makes $\tilde{F}^2Q\widehat{\Pol}(A,0)[1]$  into a pro-nilpotent filtered DGLA.

\begin{definition}
Define an $E_1$ quantisation of $A$ over $R$ to be a Maurer--Cartan element
\[
 \Delta \in \mc(\tilde{F}^2Q\widehat{\Pol}(A,0)).
\]
\end{definition}
When $A=A^0_{\bt}$ is just a CDGA,  this gives a curved $A_{\infty}$-algebra structure $A'$ on $A\llbracket\hbar\rrbracket$ with $A'/\hbar=A$,  because $\hbar\mid \Delta$. For more general stacky CDGAs, the stacky and derived structures interact in a non-trivial way for quantisations, and indeed for Poisson structures. 

\begin{remark}\label{BVrmk1}
 To strengthen the analogy between this construction and \cite{DQvanish}, we could replace $N_c\CCC^{\bt}(A) $ with its quasi-isomorphic  subcomplex of polydifferential operators. The filtration $F$ is then quasi-isomorphic to the order filtration for polydifferential operators, but the latter does not interact so well with the Lie bracket.

If we wished to consider uncurved $A_{\infty}$-algebra deformations without inner automorphisms, we would have to replace $\CCC^{\bt}(A)$ with its sub-DGLA
   $\ker( \CCC^{\bt}_R(A)\to \hat{\Tot}A)$. The analogue for  \cite{DQvanish} is the kernel of the map $\sD_A \to A$ given by evaluating at $1$.
As in  \cite[Remark \ref{DQvanish-BVrmk}]{DQvanish}, this means that the $E_0$ analogue of a strict quantisation is a  BV algebra deformation.
\end{remark}

\begin{example}\label{quantex}
 When the stacky CDGA $A$ is bounded in the stacky (cochain) direction, we may identify $\CCC^{\bt}_R(A)$ with the Hochschild complex of the CDGA $\Tot A$, as $\cHom(A^{\ten n},A)$ is then  also bounded in the cochain direction, and the functors $\Tot, \hat{\Tot}, \Tot^{\Pi}$ agree for such double complexes. In particular, this applies to stacky CDGAs of the form $O([Y/\g])$  in the notation of Example \ref{DstarBGex}. 

Given a finite rank Lie algebra $\g$ acting on a smooth affine $Y$ over $R$, the derived cotangent stack $T^*[Y/\g]$ carries a non-degenerate Poisson structure. Explicitly, if $Y=\Spec B$, this derived formal stack is represented by the stacky CDGA given by the Chevalley--Eilenberg complex $O( T^*[Y/\g]):=O([\Spec \Symm_B(\cone(\g \ten_R B \to T_{B}))/\g])$, and we then have $\Tot O( T^*[Y/\g])= \Symm_{ O([Y/\g])}T_{O([Y/\g])}$ with its natural Poisson structure as a complex of polyvectors.

A quantisation of this Poisson structure is  given by the Rees algebra $\prod_i \hbar^iF_i\cD_{\Tot O([Y/\g])}$ of the order filtration $F$ on the ring of differential operators, i.e.  the $\hbar$-adically complete sub-DGA of $\cD_{\Tot O([Y/\g])}\brh$ generated by $ O([Y/\g])$ and first order differential operators divisible by $\hbar$. This quantisation satisfies $b\star_{\hbar}a =(-1)^{\deg a \deg b} a\star_{-\hbar}b$, so will be included in the parametrisation of  Theorem \ref{quantpropsd}.
\end{example}

\begin{definition}\label{mcPLdef}
 Given a   DGLA $L$, define the the Maurer--Cartan set by 
\[
\mc(L):= \{\omega \in  L^{1}\ \,|\, d\omega + \half[\omega,\omega]=0 \in   L^{2}\}.
\]

Following \cite{hinstack}, define the Maurer--Cartan space $\mmc(L)$ (a simplicial set) of a nilpotent  DGLA $L$ by
\[
 \mmc(L)_n:= \mc(L\ten_{\Q} \Omega^{\bt}(\Delta^n)),
\]
where 
\[
\Omega^{\bt}(\Delta^n)=\Q[t_0, t_1, \ldots, t_n,\delta t_0, \delta t_1, \ldots, \delta t_n ]/(\sum t_i -1, \sum \delta t_i)
\]
is the commutative dg algebra of de Rham polynomial forms on the $n$-simplex, with the $t_i$ of degree $0$.
\end{definition}

\begin{definition}\label{Gdef}
We now define another decreasing filtration $G$ on  $Q\widehat{\Pol}(A,0)$ by setting
\begin{align*}
 G^iQ\widehat{\Pol}(A,0)&:= \hbar^{i}Q\widehat{\Pol}(A,0).
\end{align*}
We then set $G^i \tilde{F}^p:= G^i \cap \tilde{F}^p$.
\end{definition}

\begin{definition}\label{Qpoissdef}
Define  the space $Q\cP(A,0)$ of $E_1$ quantisations of $A$ over $R$  to be given by the simplicial 
set
\[
 Q\cP(A,0):= \Lim_i \mmc( \tilde{F}^2 Q\widehat{\Pol}(A,0)[1]/\tilde{F}^{i+2}).
\]
Also write
\[
 Q\cP(A,0)/G^k:= \Lim_i\mmc(\tilde{F}^2 Q\widehat{\Pol}(A,0)[1]/(\tilde{F}^{i+2}+G^k)),
\]
 so $Q\cP(A,0)= \Lim_k Q\cP(A,0)/G^k$. 
\end{definition}

When $R$ and $A=A^0$ are concentrated in non-negative  homological degrees, we can  interpret $Q\cP(A,0)$ as a space of deformations of $A$ as an $R$-linear dg category up to quasi-equivalence, and in general when $A=A^0$ and has bounded cohomology,  \cite{LowenvdBerghCurvature,BlancKatzarkovPandit} interpret $Q\cP(A,0)$  as a space of deformations of $A$ as an $R$-linear dg category up to derived Morita equivalence.

Since the functor $\hat{\Tot}$ is lax monoidal with respect to tensor products, 
for stacky CDGAs we have a natural map $\CCC^{\bt}_R(A)\to\CCC^{\bt}_R(\hat{\Tot}A)$ (rarely an equivalence), so $E_1$ quantisations give rise to curved $A_{\infty}$ deformations of the CDGA $\hat{\Tot}A$. We now give a stronger statement.

\begin{definition}\label{Perdef}
 If $A  \in DG^+dg\CAlg(R)$, define the bi-dg category $\cPer(A)$ as follows. Objects are $A$-modules $M$ in chain cochain complexes for which $M^{\#}$ is cofibrant as a graded chain complex over $A^{\#}$, $M^0$ is perfect over $A^0$, and the map $M^0\ten_{A^0}A^{\#} \to M^{\#}$ is a levelwise quasi-isomorphism. Morphisms are given by the chain cochain complexes $\cHom_A(M,N)$.

We then define $\per_{dg}(A)$ to have the same objects as $\cPer(A)$, and morphisms $\hat{\HHom}_A(M,N)$.
\end{definition}
Note that the $\infty$-category underlying $\per_{dg}(A)$ is the category of perfect modules featuring in \cite[Proposition 2.2.8]{CPTVV}. 

For every $M \in \per_{dg}(A)$, we have a $\hat{\Tot}A$-module $\hat{\Tot}M$, but this need not be cofibrant or perfect. For instance, given $b \in \z_0\z^1A$, we may set $A_b$ to be the chain cochain complex $A^{\#}_{\bt}$ with cochain differential $\pd_A+ b$. Since $A_b^{\#}=A^{\#}$, it lies in $\cPer(A)$, but $\hat{\Tot}A_b$ is seldom cofibrant.

\begin{proposition}\label{Perprop}
For $A  \in DG^+dg\CAlg(R)$,  there is a natural map in the $\infty$-category of simplicial sets from $Q\cP(A,0)$ to the space of curved $A_{\infty}$ deformations $(\per_{dg}(A)\llbracket \hbar \rrbracket, \{m^{(i)}\}_{i\ge 0})$ of the dg category $\per_{dg}(A) $, with $ \hbar^{i-1}\mid m^{(i)}$ for $i \ge 3$.
\end{proposition}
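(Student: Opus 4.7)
The plan is to construct the map by factoring through the standard derived Morita DGLA morphism from the Hochschild complex of $A$ to that of $\per_{dg}(A)$, then taking Maurer--Cartan spaces. First, observe that $\tilde{F}^2 Q\widehat{\Pol}(A,0)[1]$ embeds as a filtered sub-DGLA of $\CCC^{\bt}_R(A)[1]\llbracket \hbar \rrbracket$ via the tautological inclusion of $\sum_{p\ge 2}\Delta_p \hbar^{p-1}$; the Maurer--Cartan equation there is precisely the condition for $A\llbracket\hbar\rrbracket$, with the induced operations, to be a curved $A_{\infty}$ deformation of the underlying CDGA structure on $A$. Because $\Delta_p \in F_p$ has no Hochschild-cochain components of arity exceeding $p$, the $n$-ary operation $m^{(n)}$ of this deformation on $A\llbracket\hbar\rrbracket$ receives contributions only from $\Delta_p\hbar^{p-1}$ with $p \ge n$, so $\hbar^{n-1}\mid m^{(n)}$ for $n \ge 3$ (the cases $n \le 2$ carry the undeformed part).

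Next, I would push this deformation from $A$ to $\per_{dg}(A)$ using a natural brace-algebra morphism
\[
\rho \co \CCC^{\bt}_R(A) \to \CCC^{\bt}_R(\per_{dg}(A))
\]
induced by the derived Morita action of Hochschild cochains on perfect $A$-modules. Concretely, since $A$ is commutative, each morphism complex $\hat{\HHom}_A(M,N)$ is naturally an $A$-module with $A$-bilinear composition, so a Hochschild cochain $\phi \in \hat{\HHom}_R(A^{\ten n},A)$ defines an arity-$n$ operation on composable morphism strings in $\per_{dg}(A)$ by evaluating $\phi$ through the $A$-linear structures; this assembles into $\rho$, which strictly preserves the good-truncation filtration $F$. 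Composing the inclusion $\tilde{F}^2 Q\widehat{\Pol}(A,0)[1] \into \CCC^{\bt}_R(A)[1]\llbracket\hbar\rrbracket$ with the $\hbar$-linear extension of $\rho$ and applying $\mmc$ level-by-level in $\tilde{F}$ then yields the desired natural morphism of simplicial sets; the divisibility $\hbar^{i-1}\mid m^{(i)}$ for $i \ge 3$ transfers since $\rho$ preserves $F$ and the undeformed dg-category structure on $\per_{dg}(A)$ has $m^{(i)}=0$ for $i \ge 3$.

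The main obstacle will be the rigorous construction of $\rho$ in the stacky chain--cochain setting. Morphism complexes in $\per_{dg}(A)$ use the completed total complex $\hat{\HHom}_A = \hat{\Tot}\,\cHom_A$, so one must verify convergence of the Gerstenhaber--Schack-type formulas defining $\rho$ and their compatibility with both the chain differential $\delta$ and the cochain differential $\pd$. The cofibrancy hypothesis built into $\per_{dg}(A)$ (that $M^{\#}$ be cofibrant over $A^{\#}$), together with the multiplication property of $\hat{\HHom}_A$ noted immediately after its definition, should ensure that the classical bar-complex arguments go through verbatim.
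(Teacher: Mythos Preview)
Your first paragraph is fine: the divisibility $\hbar^{i-1}\mid m^{(i)}$ follows exactly as you say from the good-truncation filtration.

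The genuine gap is the map $\rho$. There is no natural strict (or even $L_\infty$-) brace algebra morphism $\CCC^{\bt}_R(A)\to\CCC^{\bt}_R(\per_{dg}(A))$ in the direction you claim. Your recipe ``evaluate $\phi\in\hat{\HHom}_R(A^{\ten n},A)$ through the $A$-linear structures on a composable string $f_1,\ldots,f_n$'' does not produce a well-defined operation: the $f_i$ are $A$-linear maps between modules, not elements of $A$, and there is no way to feed them to $\phi$. Knowing that $\hat{\HHom}_A(M,N)$ is an $A$-module does not help, because $\phi$ is only $R$-multilinear, not $A$-multilinear, so it cannot act through the module structure. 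Derived Morita invariance of the Hochschild complex is always established via the \emph{restriction} map to a generator, which points the other way.

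The paper's proof proceeds exactly along these lines. One first extends the definition of $Q\cP(-,0)$ to arbitrary $R$-linear bi-dg categories $\cB$ (using the categorical Hochschild complex built from $\hat{\HHom}_R(\cB(x_0,x_1)\ten_R\cdots\ten_R\cB(x_{n-1},x_n),\cB(x_0,x_n))$), so that there is a tautological map $Q\cP(\cB,0)\to Q\cP(\hat{\Tot}\cB,0)$, the latter being the space of curved $A_\infty$ deformations of the dg category $\hat{\Tot}\cB$. Applying this to the bi-dg category $\cPer(A)$ gives the forward map. What remains is to identify $Q\cP(\cPer(A),0)$ with $Q\cP(A,0)$: the restriction-to-$A$ map $\CCC^{\bt}_R(\cPer(A))\to\CCC^{\bt}_R(A)$ is a filtered quasi-isomorphism, established via a spectral sequence $\HH^i_{R^{\#}}(A^{\#},\HH^j_{A^{\#}}(\cB^{\#}))\Rightarrow\HH^{i+j}_{R^{\#}}(\cB^{\#})$ together with the observation that $\cPer(A)^0_{\#}$ is Morita equivalent to $A^0_{\#}$ and $\cPer(A)$ is homotopy Cartesian. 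The desired map in the $\infty$-category is then obtained by inverting this weak equivalence, not by constructing a section.
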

\begin{proof}
For any $R$-linear bi-dg category $\cB$, we have a Hochschild complex built from the spaces 
\[
 \uline{\CCC}^n_R(\cB)= \prod_{x_0, \ldots x_n \in \cB}\hat{\HHom}_R(\cB(x_0,x_1)\ten_R\ldots\ten_R \cB(x_{n-1},x_n), \cB(x_0, x_n)),
\]
with $Q\cP(\cB,0)$ defined analogously. Properties of $\hat{\Tot}$ then give us a natural map from  $Q\cP(\cB,0)$ to $Q\cP(\hat{\Tot}\cB,0) $, which is the space of curved $A_{\infty}$ deformations $( (\hat{\Tot}\cB)\llbracket \hbar \rrbracket, \{m^{(i)}\}_{i\ge 0})$ of the dg category $\hat{\Tot}\cB $ with $ \hbar^{i-1}\mid m^{(i)}$ for $i \ge 3$; the Maurer--Cartan conditions ensure that $\hbar^i\mid bm^{(i)}$, so every such $m$ does lie in the appropriate piece of the good truncation filtration.  

It therefore suffices to show that the map $Q\cP(\cPer(A),0)\to Q\cP(A,0)$ given by restriction to the object $A\in\cPer(A) $ is a weak equivalence. By the theory of pro-nilpotent DGLAs, this will follow if $ \uline{\CCC}^n_R(\cPer(A))\to \uline{\CCC}^n_R(A)$ is a filtered quasi-isomorphism. 


We now  observe that for any $A$-linear bi-dg category $\cB$ with cofibrant $\cHom$-bicomplexes,  there is a spectral sequence  
\[
 \HH^i_{R_{\#}}(A^{\#}_{\#}, \HH^j_{A^{\#}_{\#}}(\cB^{\#}_{\#})) \abuts \HH^{i+j}_{R_{\#}}(\cB^{\#}_{\#}).
\]
When $\cB$ is homotopy Cartesian in the sense that the map $\cB^0\ten_{A^0}A^{\#} \to \cB^{\#}$ is a levelwise quasi-isomorphism, we have a quasi-isomorphism $ (\HH^j_{A^0_{\#}}(\cB^0_{\#}, (\cB^0\ten_{A^0}^{\oL}A^{\#})_{\#}), \delta)\simeq (\HH^j_{A^{\#}_{\#}}(\cB^{\#}_{\#}), \delta) $. We then note that 
 when $\cB^0_{\#}$ is  Morita equivalent 
to $A^0_{\#}$ as a graded category, the map $M \to \HH^*_{A^0_{\#}}(\cB^0_{\#},(\cB^0_{\#}\ten_{A^0_{\#}}^{\oL}M))$ is an isomorphism of graded modules for all $A^0_{\#}$-modules $M$. 

Putting these together gives quasi-isomorphisms
\[
 \gr^F_j\CCC^{\bt}_R(A)\simeq \gr^F_j\CCC^{\bt}_R(\cPer(A)):
\]
  the bi-dg category  $\cPer(A)$ is homotopy Cartesian because its objects are; since $\cPer(A)^0_{\#}$ is equivalent to the category of graded projective $A^0_{\#}$-modules, 
it is  Morita equivalent to $A^0_{\#}$. Thus $Q\cP(\cPer(A),0)\to Q\cP(A,0)$ is indeed a weak equivalence.
\end{proof}

\begin{remark}\label{gerbermk}
In \cite{DQvanish}, we were able to consider  $E_0$ quantisations not just of the structure sheaf $\O_X$, but also of line bundles, by 
constructing a $\bG_m$-action on  quantised polyvectors. 
  
Similarly, the methods of this paper can be adapted to study $E_1$ quantisations of any $A$-linear bi-dg category $\cB$ for which the map $\hat{\Tot} A \to  \gr^F\CCC^{\bt}_A(\cB) $ is a quasi-isomorphism --- by analogy, line bundles are $A$-modules for which the map $\hat{\Tot}A \to \oR\hat{\HHom}_A(M,M)$ is a quasi-isomorphism. In particular, we can study  \'etale $\bG_m$-gerbes by establishing $B\bG_m$-equivariance. 
  One way to do this is to consider $Q\cP(\cPer(A),0)$ as in the proof of Proposition \ref{Perprop}, since  $\cPer(A)$  admits   an action of the Picard $2$-group and hence a $B\bG_m$-action. 


The resulting action is necessarily trivial modulo $G^1$, so comes from  pro-unipotent $L_{\infty}$-automorphisms of  $Q\widehat{\Pol}(A,0)$. 
 Since pro-unipotent $L_{\infty}$-automorphisms are exponentials of pro-nilpotent  $L_{\infty}$-derivations, we will in fact have an action of $B\bG_m\ten_{\Z}\Q$, so a notion of quantisation for $(\bG_m\ten_{\Z}\Q)$-gerbes.
\end{remark}

\subsection{The centre of a quantisation}\label{centresn}


\begin{definition}\label{TQpoldef0}
 Define the filtered tangent space to quantised polyvectors by 
\begin{align*}
 TQ\widehat{\Pol}(A,0)&:= Q\widehat{\Pol}(A,0)\oplus \prod_{p \ge 0}F_p\CCC^{\bt}_R(A)\hbar^{p}\eps,\\
\tilde{F}^jTQ\widehat{\Pol}(A,0)&:= \tilde{F}^jQ\widehat{\Pol}(A,0)\oplus \prod_{p \ge j}F_p\CCC^{\bt}_R(A)\hbar^{p}\eps,
\end{align*}
for $\eps$ of degree $0$ with $\eps^2=0$. Then  $TQ\widehat{\Pol}(A,0)[1] $ is a DGLA, with  Lie bracket  given by $ [u+v\eps, x+y\eps]= [u,x]+ [u,y]\eps + [v,x]\eps$. 
\end{definition}

\begin{definition}\label{TQpoldef}
Given a Maurer--Cartan element $\Delta \in  \mc(\tilde{F}^2Q\widehat{\Pol}(A,0)[1]) $, define the centre of $(A,\Delta)$ by
\[
 T_{\Delta}Q\widehat{\Pol}(A,0):= \prod_{p \ge 0}F_p\CCC^{\bt}_R(A)\hbar^{p},
\]
with derivation $\pd \pm \delta \pm b + [\Delta,-]$ (necessarily square-zero by the Maurer--Cartan conditions).
 
This has a filtration
\[
 \tilde{F}^iT_{\Delta}Q\widehat{\Pol}(A,0):= \prod_{p \ge i}F_p \CCC^{\bt}_R(A)\hbar^{p},
\]
making $T_{\Delta}Q\widehat{\Pol}(A,0)$ a filtered brace algebra by Lemma \ref{gradedHH}.
  Given $\Delta \in  \mc(F^2Q\widehat{\Pol}(A,0)/\tilde{F}^p)$, we define $T_{\Delta}Q\widehat{\Pol}(A,0)/\tilde{F}^p$ similarly ---  this is also a brace algebra as $\tilde{F}^p$ is a brace ideal.
\end{definition}

Observe that $T_{\Delta}Q\cP(A,0):= \mmc(\tilde{F}^2T_{\Delta}Q\widehat{\Pol}(A,0)[1])$ is just the fibre of $\mmc( \tilde{F}^2TQ\widehat{\Pol}(A,0)[1]) \to  \mmc(\tilde{F}^2 Q\widehat{\Pol}(A,0)[1])$ over $\Delta$. 

Similarly to Definition \ref{Gdef}, there are filtrations $G$ on  
$TQ\widehat{\Pol}(A,0), T_{\Delta}Q\widehat{\Pol}(A,0) $ given by powers of $\hbar$. 
Since $\gr_G^i\tilde{F}^{p-i}Q\widehat{\Pol}= \prod_{j \ge p-i} \gr^F_{j-i}\CCC^{\bt}_R(A)\hbar^{j-1}$,
the 
HKR isomorphism gives maps 
  \begin{align*}
 \gr_G^i\tilde{F}^pQ\widehat{\Pol}(A,0) &\to 
 \prod_{j \ge p} \hat{\HHom}_A(\Omega^{j-i}_{A},A)\hbar^{j-1}[i-j]\\
\gr_G^i\tilde{F}^pT_{\Delta}Q\widehat{\Pol}(A,0) &\to \prod_{j \ge p}\hat{\HHom}_A(\Omega^{j-i}_{A},A) \hbar^{j}[i-j],
\end{align*}
which are quasi-isomorphisms by our hypotheses on $A$ (see Assumption \ref{biCDGAprops}).

For the filtration $F$ of Definition \ref{Fdef}, we may rewrite these maps as
 \begin{align*}
 \gr_G^i\tilde{F}^pQ\widehat{\Pol}(A,0) &\to  F^{p-i}\widehat{\Pol}(A,0)\hbar^{i},\\ 
\gr_G^i\tilde{F}^pT_{\Delta}Q\widehat{\Pol}(A,0) &\to F^{p-i}T_{\pi_{\Delta}}\widehat{\Pol}(A,0)\hbar^{i},
\end{align*}
where $\pi_{\Delta} \in \mc(F^2\widehat{\Pol}(A,0)[1])$ denotes the image of $\Delta$ under the map  $ \gr_G^0\tilde{F}^2Q\widehat{\Pol}(A,0) \to  F^2\widehat{\Pol}(A,0)$. 

Since the cohomology groups of $T_{\pi_{\Delta}}\widehat{\Pol}(A,0)$ are Poisson  cohomology, we will refer to the cohomology groups of  $T_{\Delta}Q\widehat{\Pol}(A,0)$ as quantised Poisson cohomology.

\begin{definition}\label{Qnonnegdef}
 Say that an $E_1$ quantisation $\Delta =  \sum_{j \ge 2} \Delta_j \hbar^{j-1}$ is non-degenerate if the map 
\[
\Delta_2^{\sharp}\co  \Tot^{\Pi} (\Omega_{A}^1\ten_AA^0) \to \hat{\HHom}_A(\Omega^1_A, A^0)
\]
is a quasi-isomorphism and $\Tot^{\Pi} (\Omega_{A}^1\ten_AA^0)$ is a perfect complex over $A^0$.
\end{definition}

\begin{definition}\label{TQPdef}
Define the tangent spaces
\begin{eqnarray*}
 TQ\cP(A,0)&:=& \Lim_i \mmc( \tilde{F}^2 TQ\widehat{\Pol}(A,0)[1]/\tilde{F}^{i+2}),
\end{eqnarray*}
with $TQ\cP(A,0)/G^k$,  defined similarly.
 \end{definition}
These are simplicial sets over $Q\cP(A,0)$ (resp.   $Q\cP(A,0)/G^k$), fibred in simplicial abelian groups. 

\begin{definition}\label{Qsigmadef}
Define the canonical tangent vector
\[
  \sigma=-\pd_{\hbar^{-1}}\co Q\widehat{\Pol}(A,0) \to TQ\widehat{\Pol}(A,0)
\]
 by $\alpha \mapsto \alpha + \eps \hbar^{2}\frac{\pd \alpha}{\pd \hbar}$. Note that this is a morphism of filtered  DGLAs, so gives a map $ \sigma \co Q\cP(A,0)\to TQ\cP(A,0)$, with $\sigma(\Delta) \in \z^2(\tilde{F}^2T_{\Delta}Q\widehat{\Pol}(A,0))$. 
\end{definition}

\subsection{Self-dual quantisations}\label{sdsn}

\begin{definition}
 Define an involution $Q\widehat{\Pol}(A,0) \xra{*} Q\widehat{\Pol}(A,0)$ by $\Delta^*(\hbar):= i(\Delta)(-\hbar)$, for the involution  $i$ of Lemma \ref{involutiveHH}. 
\end{definition}

\begin{definition}
 Lemma \ref{involutiveHH} ensures that  $*$ is a morphism of DGLAs, and we define the space $ Q\cP(A,0)^{sd} \subset   Q\cP(A,0)$  of self-dual quantisations to be the fixed points of the involution $*$. This inherits cofiltrations $\tilde{F}$ and $G$ from $Q\cP(A,0)$.
\end{definition}

In particular, this means that when $A=A^0$ is  concentrated in  degree zero, elements of $Q\cP(A,0)^{sd}$ can be represented  by associative algebra deformations $(A',\star_{\hbar})$ of $A$, with
\[
 a\star_{-\hbar} b = b\star_{\hbar} a. 
\]
More generally, when $R$ and $A=A^0$ are concentrated in non-negative  homological degrees, elements of  $Q\cP(A,0)^{sd}$ are algebroid quantisations of $A$ equipped with an anti-involution which is semilinear under the transformation $\hbar \mapsto -\hbar$. 

For general stacky CDGAs $A$, the trivial anti-involution on $A$ extends to the anti-involution $\cHom_A(-,A)$ on the dg category $\per_{\dg}(A)$ of perfect complexes,
and similarly to \cite{braunInvolutive}, this allows us to extend the involution of Lemma \ref{involutiveHH} to the Hochschild complex of $\per_{\dg}(A)$. Applying this to the constructions of  Proposition \ref{Perprop},
 $Q\cP(A,0)^{sd}$ gives rise  to  curved $A_{\infty}$-deformations $\tilde{\cP}_{\hbar}$ of $\per_{\dg}(A)$, equipped with an anti-involution $ \tilde{\cP}_{-\hbar} \simeq  \tilde{\cP}_{\hbar}^{\opp}$ lifting the duality functor $\cHom_A(-,A)$. 

\begin{remark}\label{sdgerbermk}
As in Remark \ref{gerbermk}, we may also consider self-duality for $\bG_m$-gerbes. Since the functor sending a gerbe to its opposite  is just given by the inversion map on $B^2\bG_m$, anti-involutive gerbes are  are classified by $B^2\mu_2$,  the homotopy fixed points of the inversion map.

However, as observed in Remark \ref{gerbermk}, the space of quantisations over $B^2\bG_m$ is the pullback of a space over $B^2(\bG_m\ten_{\Z}\Q)$, so  the space of self-dual quantisations over $B^2\mu_2$ is constant.  This means that to every self-dual quantisation of $A$ there correspond  self-dual quantisations of all $\mu_2$-gerbes, and in particular of $\per_{\dg}(A)$  with duality functor $\oR\hom(-,\sL)$ for any line bundle $\sL$. One way to make sense of this example is that even if $\sL$ does not have a square root, there is necessarily an automorphism of the Hochschild complex acting as a square root of $\sL$, and thus intertwining between the respective duality functors.  
\end{remark}
 
\begin{lemma}\label{filtsd}
There are canonical weak equivalences
 \begin{align*}
 Q\cP(A,0)^{sd}/G^{2i} &\to Q\cP(A,0)^{sd}/G^{2i-1}\\
Q\cP(A,0)^{sd}/G^{2i+1} &\to (Q\cP(A,0)^{sd}/G^{2i})\by^h_{(Q\cP(A,0)/G^{2i})}(Q\cP(A,0)/G^{2i+1}).
\end{align*}
\end{lemma}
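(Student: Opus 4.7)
The plan is to analyze the action of the involution $*$ on each $G$-graded piece of $L := \tilde{F}^2 Q\widehat{\Pol}(A,0)[1]$ and reduce both claims to standard DGLA obstruction arguments. Since $*$ preserves the $\tilde{F}$- and $G$-filtrations, it descends to an involution on each finite quotient DGLA $L/(\tilde{F}^{i+2} + G^k)$, and $Q\cP(A,0)^{sd}/G^k$ identifies with $\mmc$ of the $*$-fixed sub-DGLA $L^{*}/G^k$ in the pro-$\tilde{F}$ system (using that we are over $\Q$, so fixed points commute with quotients by $*$-stable subspaces).

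The key calculation, which I expect to be the only substantive step, is to determine how $*$ acts on $\gr_G^k L$. Using the HKR identification $\gr_G^i\tilde{F}^p Q\widehat{\Pol}(A,0)\simeq F^{p-i}\widehat{\Pol}(A,0)\hbar^i$ from the end of \S\ref{centresn}, a piece corresponding to polyvector degree $q-i$ appears with $\hbar$-weight $q-1$. By Lemma \ref{involutiveHH}, the Hochschild involution acts on such a piece as $(-1)^{(q-i)-1}$, while the substitution $\hbar\mapsto -\hbar$ contributes $(-1)^{q-1}$. The product is $(-1)^{2q-i-2} = (-1)^i$, independent of $q$. Hence $*$ acts on $\gr_G^k L$ by multiplication by $(-1)^k$, and working rationally this gives $\gr_G^{2i-1}L^{*}=0$ and $\gr_G^{2i}L^{*}=\gr_G^{2i}L$.

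The first equivalence now follows immediately: the surjection $L^{*}/G^{2i}\to L^{*}/G^{2i-1}$ has kernel $\gr_G^{2i-1}L^{*}=0$, so it is an isomorphism of filtered DGLAs, and applying $\mmc$ yields the required weak equivalence.

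For the second equivalence, I would form the commutative square of DGLAs
\[
\begin{array}{ccc}
L^{*}/G^{2i+1} & \longrightarrow & L^{*}/G^{2i} \\
\downarrow & & \downarrow \\
L/G^{2i+1} & \longrightarrow & L/G^{2i}.
\end{array}
\]
The horizontal arrows are surjections with abelian kernels $\gr_G^{2i}L^{*}$ and $\gr_G^{2i}L$, which coincide by the sign calculation; a short verification (lifting $\bar y \in L/G^{2i+1}$ over a self-dual element of $L/G^{2i}$ and noting that $y - y^{*} \in \gr_G^{2i}$ must simultaneously satisfy $*$-fix and $*$-antifix, hence vanish) shows the square is strictly Cartesian. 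Since $\mmc$ converts surjections with pro-nilpotent kernel into Kan fibrations and preserves pullbacks along them, the resulting square of simplicial sets is homotopy Cartesian, giving the desired equivalence after passing to the pro-$\tilde{F}$ limit.
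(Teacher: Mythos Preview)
Your approach matches the paper's: compute that $*$ acts as $(-1)^k$ on $\gr_G^k$ and then invoke DGLA obstruction theory. There is one technical slip. The sign calculation passes through the HKR quasi-isomorphism $\gr_G^k\tilde{F}^pQ\widehat{\Pol}(A,0)\simeq F^{p-k}\widehat{\Pol}(A,0)\hbar^k$, so it only shows that $*$ acts as $(-1)^k$ on the \emph{cohomology} of $\gr_G^kL$, not strictly on the complex itself: on the Hochschild side the map $i$ of Lemma~\ref{involutiveHH} reverses arguments with signs and is not a scalar on $\gr^F_p\CCC^\bt_R(A)$. Consequently $\gr_G^{2i-1}L^{*}$ is merely acyclic, and the inclusion $\gr_G^{2i}L^{*}\hookrightarrow\gr_G^{2i}L$ is a quasi-isomorphism rather than an equality. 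Your verification that the DGLA square is strictly Cartesian then fails: the element $y-y^{*}$ is always $*$-antifixed, and your reason for declaring it $*$-fixed is precisely the strict claim that $*=+1$ on $\gr_G^{2i}$, which does not hold on the nose.

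The repair is immediate and is exactly what the paper does: use the fibre sequences
\[
Q\cP(A,0)^{sd}/G^{k+1}\;\to\; Q\cP(A,0)^{sd}/G^{k}\;\to\;\mmc\bigl(\gr_G^{k}\tilde{F}^2Q\widehat{\Pol}(A,0)^{sd}[2]\bigr)
\]
coming from abelian DGLA extensions. For $k=2i-1$ the target is contractible (acyclic graded piece), giving the first equivalence. For $k=2i$, compare with the analogous non-self-dual fibre sequence and use that $(\gr_G^{2i})^{sd}\hookrightarrow\gr_G^{2i}$ is a quasi-isomorphism to deduce that the square of Maurer--Cartan spaces is homotopy Cartesian.
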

\begin{proof}
This follows in much the same way as \cite[Lemma \ref{DQvanish-quantpropsd}]{DQvanish}. Lemma \ref{involutiveHH} ensures that the involution $*$ acts trivially on  $\widehat{\Pol}(A,0)$, since it maps $f \hbar^{p-1}$ to $ (-1)^{p-1}(-\hbar)^{p-1}$ for $f\in  \hat{\HHom}_A(\Omega^p_{A},A)\hbar^{p-1}$.
It therefore acts as multiplication by $(-1)^k$ on   $ \gr_G^k\widehat{\Pol}(A,0)= \hbar^k\gr_G^0\widehat{\Pol}(A,0)$, giving
quasi-isomorphisms
\[
\gr_G^k  \tilde{F}^pQ\widehat{\Pol}(A,0)^{sd} \simeq \begin{cases}
                                                         \gr_G^k  \tilde{F}^pQ\widehat{\Pol}(A,0) & k \text{ even}\\
0 & k \text{ odd}.
                                                        \end{cases}
\]
  The results then follow from the fibre sequences
\[
 Q\cP(A,0)^{sd}/G^{k+1} \to Q\cP(A,0)^{sd}/G^{k} \to \mmc(\gr_G^k  \tilde{F}^2Q\widehat{\Pol}(A,0)^{sd}[2]) 
\]
coming from  obstruction theory for abelian extensions of DGLAs.
\end{proof}

In particular, Lemma \ref{filtsd} gives  $Q\cP(A,0)^{sd}/G^{2} \simeq Q\cP(A,0)^{sd}/G^{1} \simeq \cP(A,0)$,  so every unshifted Poisson structure admits an essentially unique  first-order self-dual quantisation. 

\section{Quantisations and de Rham power series}\label{compatsn}
Recall that we are fixing  a chain
CDGA  $R$ over $\Q$, and a cofibrant stacky CDGA $A$ over $R$. We denote the chain differentials on $A$ and $R$  by $\delta$, and the  cochain differential on $A$ by $\pd$.

\subsection{Generalised pre-symplectic structures}

We now adapt some definitions from \cite[\S \ref{DQvanish-DRsn}]{DQvanish} and \cite[\S \ref{poisson-biprespsn}]{poisson}.

\begin{definition}\label{biDRdef}
Define the de Rham complex $\DR(A)$ to be the product total complex of the bicomplex
\[
 \Tot^{\Pi} A \xra{d} \Tot^{\Pi}\Omega^1_{A} \xra{d} \Tot^{\Pi}\Omega^2_{A}\xra{d} \ldots,
\]
so the total differential is $d \pm \pd \pm\delta $.

We define the Hodge filtration $F$ on  $\DR(A)$ by setting $F^p\DR(A) \subset \DR(A)$ to consist of terms $\Tot^{\Pi}\Omega^i_{A}$ with $i \ge p$. In particular, $F^p\DR(A)= \DR(A)$ for $p \le 0$.
\end{definition}

\begin{definition}
When $A$ is a cofibrant stacky CDGA over $R$, recall that  a $0$-shifted pre-symplectic structure $\omega$ on $A/R$ is an element
\[
 \omega \in \z^{2}F^2\DR(A).
\]
It is called symplectic if $\omega_2 \in \z^2\Tot^{\Pi}\Omega^2_{A}$ induces a quasi-isomorphism
\[
 \omega_2^{\sharp} \co \hat{\HHom}_A(\Omega^1_{A}, A^0)\to  \Tot^{\Pi} (\Omega_{A}^1\ten_AA^0) 
\]
and  $\Tot^{\Pi} (\Omega_{A}^1\ten_AA^0)$ is a perfect complex over $A^0$.
\end{definition}

\begin{definition}\label{tildeFDRdef}
Define a decreasing filtration $\tilde{F}$ on $ \DR(A)\llbracket\hbar\rrbracket$ by 
\[
 \tilde{F}^p\DR(A):= \prod_{i\ge 0} F^{p-i}\DR(A)\hbar^{i}.
\]

Define a further filtration $G$ by $ G^k \DR(A)\llbracket\hbar\rrbracket = \hbar^{k}\DR(A)\llbracket\hbar\rrbracket$.
\end{definition}

\begin{definition}\label{GPreSpdef}
 Define the space of generalised $0$-shifted pre-symplectic structures on $A/R$ to be the simplicial set
\[
 G\PreSp(A,0):= \Lim_i\mmc( \tilde{F}^2\DR(A)\llbracket\hbar\rrbracket[1]/\tilde{F}^{i+2}), 
\]
where we regard the cochain complex  $\DR(A)[1]$ as a  DGLA with trivial bracket. Write $\PreSp = G\PreSp/G^1$.

Also write $G\PreSp(A,0)/\hbar^{k}:= \Lim_i\mmc( (\tilde{F}^2\DR(A)[\hbar][1]/(G^k +\tilde{F}^{i+2)} )$, so $ G\PreSp(A,0)= \Lim_k G\PreSp(A,0)/\hbar^{k} $.

Set $G\Sp(A,0) \subset G\PreSp(A,0)$ to consist of the points whose images in $\PreSp(A,0)$ are  symplectic structures --- this is a union of path-components.
\end{definition}

\begin{remarks}
 Note that Definition \ref{GPreSpdef} is not the obvious analogue of the definition of generalised $(-1)$-shifted pre-symplectic structures from \cite[Definition \ref{DQvanish-GPreSpdef}]{DQvanish}, which used the convolution $(G*\tilde{F})^2= \tilde{F}^2+G^1$ in place of $\tilde{F}^2$ for reasons specific to negatively shifted structures. The only difference lies in the linear term, which is where the correspondence between generalised symplectic structures and non-degenerate quantisations breaks down anyway --- replacing $\tilde{F}^2$ with $(G*\tilde{F})^2$   would not significantly affect the main results of either paper, nor would eliminating the linear term altogether.

Also note that  $G\PreSp(A,0)$  is canonically weakly  equivalent to the Dold--Kan denormalisation of the good truncation complex $\tau^{\le 0}(\tilde{F}^2\DR(A)\llbracket\hbar\rrbracket[2])$ (and similarly for the various quotients we consider),   but the description in terms of $\mmc$ will simplify comparisons. In particular, we have
\[
 \pi_iG\PreSp(A,0)\cong \H^{2-i}(F^2\DR(A)) \by \hbar\H^{2-i}(F^1\DR(A))  \by \hbar^2\H^{2-i}(\DR(A))\llbracket\hbar\rrbracket. 
\]
\end{remarks}

\subsection{Formality}\label{formalitysn}

\begin{definition}
Write $\GT$ for the Grothendieck--Teichm\"uller group. This is an affine group scheme over $\Q$, with reductive quotient $\bG_m$. Denote the  pro-unipotent radical $\ker(\GT \to \bG_m)$ by $\GT^1$. 

Write $\Levi_{\GT}$ for the space of Levi decompositions of $\GT$, i.e. sections of $\GT \to \bG_m$. By the general theory  of pro-algebraic groups in characteristic $0$ (cf. \cite{humphreysLevi}, \cite[Theorem 3.2]{Levi}, or for instance \cite[Corollary 2.14]{htpy} in general), 
the space $\Levi_{\GT}$ is an affine scheme over $\Q$ equipped with the structure of a trivial $\GT^1$-torsor via the adjoint action, since the $\bG_m$-invariant subgroup of $\GT^1$ is trivial.
\end{definition}

Drinfeld associators \cite{drinfeldQuasitriangular,barnatan} form an affine $\Q$-scheme   $\Ass$ fibred over $\bG_m$. It is a bitorsor for $\GT$ (acting on the right) and the graded Grothendieck--Teichm\"uller group $\mathrm{GRT}$ acting on the left. Since $\mathrm{GRT}$ contains a distinguished copy of $\bG_m$, each  element $\Phi$ of $\Ass$ gives rise to a Levi decomposition $\sigma_{\Phi}\co \bG_m \to \GT$ characterised by the formula $\lambda \cdot \Phi = \Phi \cdot \sigma_{\Phi}(\lambda)$. We thus have an  isomorphism $\sigma_{?}\co \bG_m \backslash \Ass \to \Levi_{\GT}$, or equivalently $\Ass^1 \to \Levi_{\GT}$, of $\GT^1$-torsors. 

As explained succinctly in \cite{petersenGTformality}, formality of the $\Q$-linear  $E_2$ operad is a consequence of the observation that the   Grothendieck--Teichm\"uller group  is a pro-unipotent extension of $\bG_m$.  Since $\GT$ acts on $E_2$,  any Levi decomposition $w \co \bG_m \to \GT$ gives a weight decomposition (i.e. a  $\bG_m$-action) of $E_2$ which splits the good truncation filtration,  so gives an equivalence between $E_2$ and $P_2$. Since the natural morphism from the Lie operad to the  $E_2$ operad is given in each arity by inclusion of the top weight term for the decreasing filtration, it follows that such an equivalence $E_2 \simeq P_2$  automatically respects the natural maps from the Lie operad on each side. 

\begin{definition}
 Given a  Levi decomposition  $w \in \Levi_{\GT}(\Q)$, we denote by $p_w$ the resulting $\infty$-functor  from $E_2$-algebras to $P_2$-algebras over $\Q$, which respects the underlying $L_{\infty}$-algebras.
\end{definition}

 As in \cite{voronovHtpyGerstenhaber}, brace algebras are naturally $E_2$-algebras, so $\CCC^{\bt}_R(A)$ has an $E_2$-algebra structure. Moreover, the equivalence between $E_2$ and $P_2$ necessarily respects the good truncation filtrations, and the filtered complex $(\CCC^{\bt}_R(A),F)$ is an algebra with respect to the brace operad filtered by good truncation. This yields a filtered $P_2$-algebra $(p_{w}\CCC^{\bt}_R(A),F)$ over $A$ with $F_p \cdot F_q \subset F_{p+q}$ and $[F_p,F_q] \subset F_{p+q-1}$, with a filtered  $L_{\infty}$-quasi-isomorphism $ (p_{w}\CCC^{\bt}_R(A)[1],F)\simeq (\CCC^{\bt}_R(A)[1],F)$. 

\begin{definition}
 For any of the definitions from \S \ref{defsn}, we add the subscript $w$ to indicate that we are replacing $(\CCC^{\bt}_{R}(A),F) $ with $(p_{w}\CCC^{\bt}_{R}(A),F)$ in the construction. 
\end{definition}

Since these DGLAs are quasi-isomorphic and $\mmc$ preserves weak equivalences, in particular we have canonical weak equivalences $Q\cP_w(A,0) \simeq Q\cP(A,0)$. Properties of the filtration $\tilde{F}$ then ensure that the complexes $T_{\Delta}Q\widehat{\Pol}_w(A,0)$ are filtered $P_2$-algebras.

\begin{remark}\label{Levirmk}
 Rather than just choosing $w\in \Levi_{\GT}(\Q)$, a more natural approach might be to consider the simplicial set $\oR \Levi_{\GT}(R)$ of all Levi decompositions over $R$. This would lead to a space $Q\cP_{\Levi}(A,0)$ over $ \oR \Levi_{\GT}(R)$ with fibre $Q\cP_w(A,0)$ over $w$ and a canonical weak equivalence $Q\cP_{\Levi}(A,0) \simeq \oR \Levi_{\GT}(R) \by  Q\cP(A,0)$.
\end{remark}

\subsection{Compatible quantisations}

We will now develop the notion of compatibility between a generalised pre-symplectic structure and an $E_1$  quantisation, generalising the notion of compatibility between $0$-shifted pre-symplectic and Poisson structures from \cite{poisson}. The following definitions are adapted from \cite[Definition \ref{poisson-mudef}]{poisson}.

\begin{definition}\label{mudef}
Given a stacky CDGA $B$ over $A$ and a  derivation $\Delta \in \mc( \hat{\HHom}_B(\Omega^1_{B}, B))$, define 
\[
 \mu(-,\Delta) \co \DR(A) \to \hat{\Tot} B
\]
to be the morphism of graded $A$-algebras given on generators $ \Omega^1_{A}$ by setting
\[
 \mu(a df, \Delta):=  a\Delta(f), 
\]
and then applying $\hat{\Tot}$ (noting that $\Tot^{\Pi} \Omega_A^p = \hat{\Tot}\Omega_A^p $).

The proof of \cite[Lemma \ref{poisson-keylemma}]{poisson} ensures that this becomes a chain map (and hence an $R$-CDGA morphism)
\[
 \mu(-,\Delta) \co \DR(A) \to (\hat{\Tot} B, (\pd \pm\delta)_B +\Delta).
\]
\end{definition}

\begin{definition}\label{muwdef}
Given a choice $w \in \Levi_{\GT}(\Q)$ of Levi decomposition for $\GT$ and  $\Delta \in  Q\cP(A,0)_w/G^j$ define 
\[
 \mu_w(-,\Delta) \co \DR(A)\llbracket\hbar\rrbracket/\hbar^j \to T_{\Delta}Q\widehat{\Pol}_w(A,0)/G^j
\]
by applying Definition \ref{mudef} to the stacky CDGAs
\[
 T_0Q\widehat{\Pol}(A,0):= (\prod_{i=0}^kF_i p_w\uline{\CCC}^{\bt}_R(A)\hbar^{i})/G^j
 \]
and the derivation $[\Delta, -]$, then taking the limit over all $k$.
Observe that this map preserves the filtration $\tilde{F}$.
\end{definition}

\begin{definition}\label{Qcompatdef}
We say that a generalised    pre-symplectic structure $\omega$ and an $E_1$ quantisation $\Delta$   are  $w$-compatible (or a $w$-compatible pair) if 
\[
 [\mu_w(\omega, \Delta)] = [-\pd_{\hbar^{-1}}(\Delta)] \in  \H^1(\tilde{F}^2T_{\Delta}Q\widehat{\Pol}_w(A,0)) \cong \H^1(\tilde{F}^2T_{\Delta}Q\widehat{\Pol}(A,0)),
\]
where $\sigma=-\pd_{\hbar^{-1}}$ is the canonical tangent vector of Definition \ref{Qsigmadef}. 
\end{definition}

\begin{definition}\label{vanishingdef}
Given a simplicial set $Z$, an abelian group object $A$ in simplicial sets over $Z$,  a space $X$ over $Z$ and a morphism  $s \co X \to A$ over $Z$, define the homotopy vanishing locus of $s$ over $Z$ to be the homotopy limit of the diagram
\[
\xymatrix@1{ X \ar@<0.5ex>[r]^-{s}  \ar@<-0.5ex>[r]_-{0} & A \ar[r] & Z}.
\]
\end{definition}

\begin{definition}\label{Qcompdef}
Define the space $Q\Comp_w(A,0)$ of $w$-compatible quantised $0$-shifted pairs to be the homotopy vanishing locus of  
\[
 (\mu_w - \sigma) \co G\PreSp(A,0) \by Q\cP_w(A,0) \to TQ\cP_w(A,0)
\]
over $Q\cP_w(A,0)$

We define a cofiltration on this space by setting $ Q\Comp_w(A,0)/G^j$ to be the homotopy vanishing locus of  
\[
 (\mu_w - \sigma) \co (G\PreSp(A,0)/G^j)  \by (Q\cP_w(A,0)/G^j)  \to TQ\cP_w(A,0)/G^j 
\]
over $Q\cP_w(A,0)/G^j $.

\end{definition}

When $j=1$, note that this recovers the notion of compatible $0$-shifted pairs from \cite[\S \ref{poisson-Artincompat}]{poisson}.

\begin{definition}
 Define $Q\Comp_w(A,0)^{\nondeg} \subset Q\Comp_w(A,0)$ to consist of $w$-compatible quantised pairs $(\omega, \Delta)$ with $\Delta$ non-degenerate. This is a union of path-components, and by \cite[Lemma \ref{poisson-compatnondeg}]{poisson} any pre-symplectic form compatible with a non-degenerate quantisation is symplectic, so there is a natural projection 
\[
 Q\Comp_w(A,0)^{\nondeg}\to G\Sp(A,0)
\]
as well as the canonical map
\[
 Q\Comp_w(A,0)^{\nondeg} \to Q\cP_w(A,0)^{\nondeg}.
\]
\end{definition}

\subsection{The equivalences}

\begin{proposition}\label{QcompatP1} 
For any Levi decomposition $w$ of $\GT$,  the canonical map
\begin{eqnarray*}
    Q\Comp_w(A,0)^{\nondeg} \to  Q\cP_w(A,0)^{\nondeg}\simeq  Q\cP(A,0)^{\nondeg}          
\end{eqnarray*}
 is a weak equivalence. In particular, there is a morphism
\[
  Q\cP(A,0)^{\nondeg} \to G\Sp(A,0)
\]
in the homotopy category of simplicial sets.
\end{proposition}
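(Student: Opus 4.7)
The plan is to reduce the statement to showing that the homotopy fibre of $Q\Comp_w(A,0)^{\nondeg} \to Q\cP_w(A,0)^{\nondeg}$ over each non-degenerate quantisation $\Delta$ is contractible. Since $TQ\cP_w(A,0) \to Q\cP_w(A,0)$ is a fibration in simplicial abelian groups, this homotopy fibre identifies with the homotopy vanishing locus of the affine map
\[
\omega \mapsto \mu_w(\omega,\Delta) - \sigma(\Delta) : G\PreSp(A,0) \to T_\Delta Q\cP_w(A,0),
\]
and hence with the Maurer--Cartan space of the cocone of $\mu_w(-,\Delta) : \tilde{F}^2\DR(A/R)\brh[1] \to \tilde{F}^2 T_\Delta Q\widehat{\Pol}_w(A,0)[1]$, translated by a chosen cocycle lift of $-\pd_{\hbar^{-1}}\Delta$. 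Contractibility therefore follows once one knows that $\mu_w(-,\Delta)$ is a quasi-isomorphism of filtered cochain complexes and that $\sigma(\Delta)$ is a coboundary in the target, the latter being automatic from the former once the former has been upgraded to an equivalence of the relevant fibrations.

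Next I would establish the filtered quasi-isomorphism by induction up the two filtrations $G$ and $\tilde{F}$, both of which are complete and Hausdorff. On each double-associated graded piece, the HKR identification from \S\ref{centresn} gives
\[
\gr_G^i \gr^{\tilde F}_p T_\Delta Q\widehat{\Pol}_w(A,0) \simeq \hat{\HHom}_A(\Omega^{p-i}_{A/R},A)\hbar^p[i-p],
\]
and a parallel identification for $\gr_G^i \gr^{\tilde F}_p \DR(A/R)\brh$ in terms of $\Tot^\Pi \Omega^{p-i}_{A/R}$. Under these identifications the map reduces to the classical map $\omega \mapsto \mu(\omega,\pi_\Delta)$ into the $\pi_\Delta$-twisted polyvectors of \cite{poisson}, which is given, up to sign, by wedge-multiplication by $\pi_\Delta$ iterated an appropriate number of times. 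Non-degeneracy (Definition \ref{Qnonnegdef}) says precisely that $\pi_\Delta^{\sharp}$ is a quasi-isomorphism with perfect source; the usual bookkeeping on exterior powers of a quasi-isomorphism between perfect complexes then forces contraction with $\pi_\Delta^{\wedge(p-i)}$ to be a quasi-isomorphism on every graded piece.

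The main obstacle is justifying that the formal map $\mu_w(-,\Delta)$, which is constructed via the $L_\infty$-equivalence $p_w$ between $E_2$- and $P_2$-structures rather than at the level of the brace algebra itself, does reduce on associated gradeds to the classical $\mu(-,\pi_\Delta)$. This is handled by Lemma \ref{gradedHH}: on $\gr^F \CCC^{\bt}_R(A)$ all higher braces vanish, so the $E_2$-structure is already $P_2$ on the associated graded, and $p_w$ acts as the identity there; thus $\mu_w$ specialises to $\mu$ on the associated graded, reducing us to the underived compatibility result of \cite[\S \ref{poisson-Artincompat}]{poisson}. Feeding this quasi-isomorphism back through the tower of fibre sequences yields contractibility of the fibre, hence the first claim. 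The second assertion is then formal: invert the weak equivalence to obtain $Q\cP(A,0)^{\nondeg} \simeq Q\Comp_w(A,0)^{\nondeg}$ in the homotopy category, then compose with the first projection $Q\Comp_w(A,0)^{\nondeg} \to G\PreSp(A,0)$, which factors through $G\Sp(A,0)$ by \cite[Lemma \ref{poisson-compatnondeg}]{poisson}.
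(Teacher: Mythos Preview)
Your proposal is correct and follows essentially the same route as the paper's proof: identify the homotopy fibre over a fixed non-degenerate $\Delta$ as the homotopy fibre of $\mu_w(-,\Delta)\co G\PreSp(A,0)\to T_\Delta Q\cP_w(A,0)$ over $\sigma(\Delta)$, then show $\mu_w(-,\Delta)$ is a bifiltered quasi-isomorphism by checking on each $\gr_G^k\gr_{\tilde F}^p$, where it becomes contraction with $\pi_\Delta^{\wedge(p-k)}$ and non-degeneracy applies. Your added paragraph explaining via Lemma \ref{gradedHH} why $\mu_w$ reduces to the classical $\mu$ on associated gradeds is a point the paper leaves implicit; the only quibble is your phrase ``$\sigma(\Delta)$ is a coboundary in the target'', which is not what is needed --- once $\mu_w(-,\Delta)$ is a weak equivalence, the homotopy fibre over \emph{any} point (equivalently, the $\mmc$ of the acyclic cocone) is contractible, with no separate hypothesis on $\sigma(\Delta)$.
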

\begin{proof}
We adapt the proof of \cite[Proposition \ref{poisson-compatP1}]{poisson}.
For any $\Delta \in Q\cP_w(A,0)$, the homotopy fibre of $Q\Comp_w(A,0)^{\nondeg} $ over $\Delta$ is just the homotopy fibre of
\[
\mu_w(-,\Delta)  \co G\PreSp(A,0)  \to T_{\Delta}Q\cP_w(A,0) 
\]
over $-\pd_{\hbar^{-1}}(\Delta)$.

The map $\mu_w(-,\Delta) \co \DR(A)\llbracket\hbar\rrbracket \to T_{\Delta}Q\widehat{\Pol}_w(A,0)$ is a morphism of complete $\tilde{F}$-filtered $R\llbracket\hbar\rrbracket$-CDGAs by the proof of \cite[Lemma \ref{poisson-keylemma}]{poisson}. Since the morphism is $R\llbracket\hbar\rrbracket$-linear, it maps $G^k\tilde{F}^p\DR(A)\llbracket\hbar\rrbracket$ to $   G^k\tilde{F}^pT_{\Delta}Q\widehat{\Pol}_w(A,0)$. Non-degeneracy of $\Delta_2$ modulo $F^1$ implies that $\mu_w(-,\Delta)$ induces  quasi-isomorphisms
\[
  \Tot^{\Pi}\Omega^{p-k}\hbar^{k}[k-p] \to \hat{\HHom}_A(\Omega^{p-k}_{A}, A)\hbar^{p-k}[k-p]
\]
on the associated gradeds $\gr_G^k\gr_{\tilde{F}}^p$.  We therefore have a quasi-isomorphism of bifiltered complexes, so we have isomorphisms on homotopy groups:
\begin{eqnarray*}
 \pi_jG\PreSp(A,0)  &\to& \pi_jT_{\Delta}Q\cP(A,0)\\
 \H^{2-j}(\tilde{F}^2 \DR(A)\llbracket\hbar\rrbracket) &\to&  \H^{2-j}(\tilde{F}^2T_{\Delta}Q\widehat{\Pol}(A,0)).
\end{eqnarray*}
\end{proof}

\begin{proposition}\label{quantprop}
For any Levi decomposition $w$ of $\GT$, the   maps
\begin{align*}
 Q\cP_w(A,0)^{\nondeg}/G^j &\to (Q\cP_w(A,0)^{\nondeg}/G^2)\by^h_{(G\Sp(A,0)/G^2)}(G\Sp(A,0)/G^j) \\ 
&\simeq (Q\cP_w(A,0)^{\nondeg}/G^2)\by \prod_{2 \le i<j } \mmc(\DR(A)\hbar^i[1])
\end{align*}
coming from Proposition \ref{QcompatP1}  are weak equivalences for all $j \ge 2$.
\end{proposition}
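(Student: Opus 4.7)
The plan is to prove both equivalences by a standard induction on $j \ge 2$, with the compatibility result of Proposition~\ref{QcompatP1} supplying the crucial identification of obstruction complexes via $\mu_w$.

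Write $Q_j := Q\cP_w(A,0)^{\nondeg}/G^j$, $S_j := G\Sp(A,0)/G^j$, and
\[
  P_j := Q_2 \by^h_{S_2} S_j.
\]
The natural map $\phi_j \co Q_j \to P_j$ combines the $G^2$-reduction with (the $G$-filtered analogue of) Proposition~\ref{QcompatP1}, and the base case $j=2$ is tautological. For the inductive step, I would compare homotopy fibres over $\bar\Delta \in Q_j$ in the square
\[
\xymatrix@C=3em{
Q_{j+1} \ar[r]^{\phi_{j+1}} \ar[d] & P_{j+1} \ar[d] \\
Q_j \ar[r]_{\phi_j}^{\sim} & P_j.
}
\]
A direct Hochschild-weight count (using $[F_p,F_q]\subset F_{p+q-1}$ together with the $\hbar$-grading) gives $[G^i,G^{j'}] \subset G^{i+j'}$ on $\tilde F^2 Q\widehat\Pol_w(A,0)$, so each $\gr_G^j \tilde F^2 Q\widehat\Pol_w$ for $j \ge 1$ has trivial bracket and sits as a square-zero ideal in $(\tilde F^2 Q\widehat\Pol_w/G^{j+1})[1]$. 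Standard pro-nilpotent obstruction theory then presents the left-hand fibre over $\bar\Delta$ as
\[
  \mmc\bigl(\gr_G^j\tilde F^2 Q\widehat\Pol_w(A,0)[1],\; \pd\pm\delta\pm b+[\Delta,-]\bigr),
\]
for any lift $\Delta$, where only the $G^0$-component of $\Delta$ contributes to the twist (higher-order parts fall into $\gr_G^{>j}$). The HKR quasi-isomorphism from the discussion before Definition~\ref{Qnonnegdef}, together with the $P_2$-structure coming from the Levi decomposition $w$ (which ensures that higher braces vanish on the associated graded), then identifies this with $\mmc(T_{\pi_\Delta}\widehat\Pol(A,0)\hbar^j[1])$, where $\pi_\Delta$ is the Poisson bivector part of $\Delta$. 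The right-hand fibre is $\mmc(\DR(A/R)\hbar^j[1])$, since the DGLA structure on $\tilde F^2\DR(A/R)\llbracket\hbar\rrbracket[1]$ is abelian and $F^{2-j}\DR=\DR$ for $j\ge 2$. By $R\llbracket\hbar\rrbracket$-linearity and $\tilde F$-filteredness, the map induced by $\phi_{j+1}$ on these fibres is $\hbar^j\mu(-,\pi_\Delta)$, which is a quasi-isomorphism by non-degeneracy of $\pi_\Delta$, exactly as in the proof of Proposition~\ref{QcompatP1}. This closes the induction and establishes the first equivalence.

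For the second equivalence, the triviality of the bracket on $\tilde F^2\DR(A/R)\llbracket\hbar\rrbracket[1]$ means $G\PreSp(A,0)/G^j$ is the Dold--Kan denormalisation of the underlying complex, and the $G$-filtration splits there as a direct sum. Together with $F^{2-i}\DR = \DR$ for $i\ge 2$ this gives
\[
  S_j \simeq S_2 \by \prod_{2\le i<j}\mmc(\DR(A/R)\hbar^i[1]),
\]
and the homotopy fibre product over $S_2$ with $Q_2$ extracts the displayed factorisation.

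The main obstacle is the precise DGLA identification of the twisted abelian fibre $(\gr_G^j\tilde F^2 Q\widehat\Pol_w[1],\; \pd\pm\delta\pm b+[\Delta,-])$ with $T_{\pi_\Delta}\widehat\Pol(A,0)\hbar^j[1]$: this requires both that the Levi decomposition $w$ kills higher braces on the associated graded (promoting the HKR quasi-isomorphism to one of $P_2$-algebras up to the level of the twist) and that the higher-order part of $\Delta$ acts trivially on $\gr_G^j$. Once these are in place, the rest reduces to the non-degeneracy quasi-isomorphism already employed for Proposition~\ref{QcompatP1}.
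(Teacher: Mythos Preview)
There is a genuine gap in your identification of the map on fibres. You assert that the map induced by $\phi_{j+1}$ between the fibre $T_{\pi_\Delta}\widehat{\Pol}(A,0)\hbar^j$ on the $Q$-side and the fibre $\DR(A/R)\hbar^j$ on the $P$-side is $\hbar^j\mu(-,\pi_\Delta)$, citing only ``$R\llbracket\hbar\rrbracket$-linearity and $\tilde F$-filteredness'' and then non-degeneracy. This is not correct. The map $\phi_j$ is obtained by inverting the equivalence $Q\Comp_w(A,0)^{\nondeg}/G^j \xrightarrow{\sim} Q_j$ of Proposition~\ref{QcompatP1} and then projecting; the compatibility condition $\mu_w(\omega,\Delta)\simeq\sigma(\Delta)$ therefore enters the linearisation. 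Differentiating that condition along a tangent vector $\alpha$ to $\Delta$ gives
\[
 \mu(\beta,\pi)+\nu(\omega,\pi)(\alpha)\simeq -\pd_{\hbar^{-1}}(\alpha),
\]
where $\nu(\omega,\pi)$ is the derivative of $\mu(\omega,-)$ at $\pi$ and $\beta$ is the induced variation of $\omega$. Hence the map on fibres is $-\mu(-,\pi)^{-1}\circ\bigl(\nu(\omega,\pi)+\pd_{\hbar^{-1}}\bigr)$, not $\mu(-,\pi)$.

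The piece you are missing is exactly the verification that $\nu(\omega,\pi)+\pd_{\hbar^{-1}}$ is a quasi-isomorphism. On $\gr_F^p$ one computes $\nu(\omega,\pi)\simeq p\hbar$ (using $\pi^{\sharp}\circ\omega^{\sharp}\simeq\id$) and $\pd_{\hbar^{-1}}=(1-j-p)\hbar$, so their sum is $(1-j)\hbar$, invertible precisely when $j\ge 2$. This is the calculation the paper performs, packaged via the cocone $N(\omega,\pi,j)$ governing the fibre of $Q\Comp_w/G^{j+1}\to Q\Comp_w/G^j$. It is also what singles out $j\ge 2$: your argument invokes only non-degeneracy of $\pi_\Delta$, which is independent of $j$ and would, if sufficient, apply equally to the step $j=1$ --- where the statement fails. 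The ``main obstacle'' you flag (the HKR identification of the $Q$-fibre) is in fact routine; the substantive point is the $(1-j)$ computation for the map between fibres.
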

\begin{proof}
 The proof of \cite[Proposition \ref{DQvanish-quantprop}]{DQvanish} generalises to this setting. We have a commutative diagram 
\[
\begin{CD}
  (Q\Comp_w(A,0)/G^{j+1})_{(\omega, \pi)}@>>> (Q\Comp_w(A,0)/G^j)_{(\omega,\pi)} @>>> \mmc(N(\omega,\pi,j)[2])\\
@VVV @VVV @VVV \\
(G\PreSp(A,0)/G^{j+1})_{\omega}@>>> (G\PreSp(A,0)/G^{j})_{\omega} @>>> \mmc(F^{2-j}\hbar^{j}\DR(A)[2])
\end{CD}
\]
of fibre sequences, with   $N(\omega,\pi, j)$ the cocone of the map
\begin{align*}
F^{2-j}\DR(A)\hbar^{j}
\oplus 
(F^{2-j}\widehat{\Pol}(A,0)\hbar^{j}, \delta_{\pi}) &\to F^{2-j}T_{\pi}\widehat{\Pol}(A,0)\hbar^{j}
\end{align*}
given by combining                                
\begin{align*}
   \mu(-,\pi) \co F^{2-j}\DR(A)\hbar^{j}  &\to F^{2-j}T_{\pi}\widehat{\Pol}(A,0)\hbar^{j}
\end{align*}
with 
\begin{align*}
 \nu(\omega, \pi) + \pd_{\hbar^{-1}} \co F^{2-j}T_{\pi}\widehat{\Pol}(A,0)\hbar^{j-1} &\to F^{2-j}T_{\pi}\widehat{\Pol}(A,0)\hbar^{j}.
\end{align*}
Here $\nu(\omega, \pi)$ is the tangent map of $\mu(\omega, -)$ at $\pi$, given by $\mu(\omega, \pi+ \rho \eps) = \mu(\omega, \pi) + \nu(\omega, \pi)(\rho)\eps$ with $\eps^2=0$.

As in \cite[Lemma \ref{DQvanish-tangentlemma}]{DQvanish}, on the associated graded piece 
\[
 \gr_F^p\widehat{\Pol}(A,0)\hbar^{j} = \hat{\HHom}_A(\Omega^p_{A},A)\hbar^{p+j-1},
\]
the map $\nu(\omega, \pi)$ is given by $p\L^p(\pi^{\sharp} \circ \omega^{\sharp})\hbar$, while $\pd_{\hbar^{-1}}= (1-j-p)\hbar$. Since $\pi$ is non-degenerate, $ \pi^{\sharp} \circ \omega^{\sharp}$ is homotopic to  $1$, so $ \gr_F^p(\nu(\omega, \pi) + \pd_{\hbar^{-1}})$ is homotopic to $(1-j)\hbar$. As this is an isomorphism for all $j \ge 2$, 
 the map  $N(\omega, \pi,j) \to F^{2-j}\DR(A)\hbar^{j}$ is quasi-isomorphism, which inductively gives the required weak equivalences from the fibre sequences above.
\end{proof}

\begin{remark}\label{quantrmk}
Taking the limit over all $j$, Proposition \ref{quantprop}  gives an equivalence
\[
  Q\cP_w(A,0)^{\nondeg} \simeq (Q\cP_w(A,0)^{\nondeg}/G^2)\by \prod_{i \ge 2} \mmc(\DR(A)\hbar^i[1]);
\]
in particular, this means that there is a canonical map 
\[
 (Q\cP(A,0)^{\nondeg}/G^2) \to Q\cP(A,0)^{\nondeg},
\]
dependent on $w$, corresponding to the distinguished point $0 \in \mmc( \hbar^2\DR(A)\llbracket\hbar\rrbracket)$.

Thus to quantise a non-degenerate $0$-shifted Poisson structure $\pi =\sum_{j \ge 2} \pi_j$ (or equivalently, by \cite[Corollary \ref{poisson-compatcor2}]{poisson}, a $0$-shifted symplectic  structure), it suffices to lift the power series $\sum_{j \ge 2} \pi_j \hbar^{j-1}$  to a Maurer--Cartan element of $\prod_{j \ge 2} (F_j\CCC^{\bt}_R(A)/F_{j+2})\hbar^{j-1}$.

Even if $\pi$ is degenerate, a variant of Proposition \ref{quantprop} still holds. Because $\pi^{\sharp} \circ \omega^{\sharp}$ is homotopy idempotent, the map   $\gr_F^p\nu(\omega, \pi)$ has eigenvalues in the interval $[0,p]$, so we just replace  $(1-j)$ with an operator  having eigenvalues in the interval $[1-p-j, 1-j]$.
 Since this is still a quasi-isomorphism for $j>1$, we have
\[
 Q\Comp_w(A,0) \simeq (Q\Comp_w(A,0)/G^2)\by \prod_{i \ge 2} \mmc(\DR(A)\hbar^i).
\]
giving a sufficient first-order criterion  for degenerate quantisations to exist.
\end{remark}

\begin{remark}\label{quantLevirmk}
As in Remark \ref{Levirmk}, we could consider the space $ \oR \Levi_{\GT}(R)$ of $R$-linear Levi decompositions, and the proof of Proposition \ref{quantprop} then gives equivalences
\begin{align*}
&\oR \Levi_{\GT}(R) \by   Q\cP(A,0)^{\nondeg}/G^j\\
 &\to \oR \Levi_{\GT}(R) \by  (Q\cP(A,0)^{\nondeg}/G^2)\by^h_{(G\Sp(A,0)/G^2)}(G\Sp(A,0)/G^j) \\ 
&\simeq \oR \Levi_{\GT}(R) \by   (Q\cP(A,0)^{\nondeg}/G^2)\by \prod_{2 \le i<j } \mmc(\DR(A)\hbar^i[1])
\end{align*}
over $\oR \Levi_{\GT}(R)$. 
\end{remark}

\subsubsection{Self-duality}

\begin{theorem}\label{quantpropsd}
For any Levi decomposition $w$ of $\GT$, there is a canonical weak equivalence
\[
  Q\cP(A,0)^{\nondeg,sd} \simeq \cP(A,0)^{\nondeg} \by \mmc(\hbar^2 \DR(A)\llbracket\hbar^2\rrbracket[1]).
\]
In particular, $w$ gives  a canonical choice of self-dual quantisation for any non-degenerate $0$-shifted Poisson structure  on $A$.
\end{theorem}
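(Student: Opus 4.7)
The plan is to combine Proposition \ref{quantprop} with Lemma \ref{filtsd} by induction on the $G$-filtration. For the base case, Lemma \ref{filtsd} gives canonical equivalences $Q\cP(A,0)^{sd}/G^2 \simeq Q\cP(A,0)^{sd}/G^1 \simeq \cP(A,0)$, and restricting to the non-degenerate component produces the first factor $\cP(A,0)^{\nondeg}$.

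For the inductive step I would prove
\[
Q\cP(A,0)^{\nondeg, sd}/G^{j+1} \simeq \cP(A,0)^{\nondeg} \by \prod_{\substack{2 \le i \le j \\ i \text{ even}}} \mmc(\DR(A/R) \hbar^{i}[1])
\]
by comparing to the previous level. When $j$ is odd, Lemma \ref{filtsd} yields $Q\cP^{sd}/G^{j+1} \simeq Q\cP^{sd}/G^{j}$ and no new factor appears, which matches the product (still indexed by even $i$). When $j$ is even, Lemma \ref{filtsd} identifies
\[
Q\cP^{sd}/G^{j+1} \simeq Q\cP^{sd}/G^{j} \by^h_{Q\cP/G^{j}} Q\cP/G^{j+1},
\]
and, restricting to the non-degenerate component, Proposition \ref{quantprop} (or more precisely its proof, tracking the $G$-graded pieces) describes the homotopy fibre of $Q\cP^{\nondeg}/G^{j+1} \to Q\cP^{\nondeg}/G^{j}$ as $\mmc(\DR(A/R) \hbar^{j}[1])$, giving the extra factor at $i=j$.

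Passing to the limit over $j$, using completeness of $Q\widehat{\Pol}(A,0)$ with respect to $G$, assembles the product of even-indexed factors into $\mmc(\hbar^{2} \DR(A/R) \llbracket \hbar^{2} \rrbracket [1])$, yielding the equivalence of the theorem. The canonical self-dual quantisation associated to a non-degenerate Poisson structure $\pi$ is then the image of $(\pi,0)$ under the inverse equivalence.

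The main obstacle I expect is matching the two parametrisations at the even steps: namely, checking that the homotopy fibre of $Q\cP^{\nondeg}/G^{j+1} \to Q\cP^{\nondeg}/G^{j}$ identified by Proposition \ref{quantprop} via the compatible pre-symplectic structure $\mu_w$ agrees, under the equivalence of Lemma \ref{filtsd}, with the fibre taken over a self-dual basepoint. This amounts to verifying naturality of the compatibility construction of Proposition \ref{QcompatP1} with respect to the involution $*$, which reduces to comparing the $(-1)^k$-weight action of $*$ on each $\gr_G^k$ piece of $Q\widehat{\Pol}(A,0)$ (established in the proof of Lemma \ref{filtsd} via Lemma \ref{involutiveHH}) with the $\hbar \mapsto -\hbar$ action on the de Rham power series side.
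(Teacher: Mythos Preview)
Your proposal is correct and follows the paper's proof essentially verbatim: induct on the $G$-filtration, using Lemma \ref{filtsd} at each step and invoking Proposition \ref{quantprop} at the even steps to split off the factor $\mmc(\DR(A/R)\hbar^{j}[1])$. Your final concern is unnecessary: once Lemma \ref{filtsd} expresses $Q\cP^{\nondeg,sd}/G^{j+1}$ as the homotopy pullback $Q\cP^{\nondeg,sd}/G^{j}\by^h_{Q\cP^{\nondeg}/G^{j}}Q\cP^{\nondeg}/G^{j+1}$, the product splitting of $Q\cP^{\nondeg}/G^{j+1}\to Q\cP^{\nondeg}/G^{j}$ supplied by Proposition \ref{quantprop} pulls back immediately to a product with no further compatibility check against $*$ required --- the separate question of whether the resulting even power series coincides with the even coefficients of the $w$-compatible generalised symplectic structure is treated in Remark \ref{oddcoeffsrmk}, and depends on whether $w(-1)=t$.
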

\begin{proof}
Lemma \ref{filtsd} implies that we have weak equivalences
\begin{align*}
 Q\cP(A,0)^{sd}/G^{2i} &\to Q\cP(A,0)^{sd}/G^{2i-1}\\
Q\cP(A,0)^{sd}/G^{2i+1} &\to (Q\cP(A,0)^{sd}/G^{2i})\by^h_{(Q\cP(A,0)/G^{2i})}(Q\cP(A,0)/G^{2i+1}).
\end{align*}

Combined with Proposition \ref{quantprop}, the latter  gives  weak equivalences
\[
 Q\cP(A,0)^{\nondeg,sd}/G^{2i+1} \to (Q\cP(A,0)^{\nondeg,sd}/G^{2i})\by \mmc(\hbar^{2i} \DR(A)[1])
\]
for all $i>0$, so
\begin{align*}
 Q\cP(A,0)^{\nondeg,sd}/G^{2i+1}&\simeq   (Q\cP(A,0)^{\nondeg,sd}/G^{2i})\by \mmc(\hbar^{2i} \DR(A)[1])\\
& \simeq Q\cP(A,0)^{\nondeg}/G^{2i-1}\by \mmc(\hbar^{2i} \DR(A)[1]),
\end{align*}
and we have seen that $*$ acts trivially on  $ Q\cP(A,0)/G^1=\cP(A,0) $, so $ Q\cP(A,0)^{sd}/G^1\simeq \cP(A,0)$.
\end{proof}

\begin{example}
When applied to the polynomial ring $A=R[t_1, \ldots, t_d]$ concentrated in degree $0$, Theorem \ref{quantpropsd} implies  that the map  $Q\cP(A,0)^{\nondeg,sd} \to \cP(A,0)^{\nondeg}$ has simply connected fibres, via 
vanishing of de Rham cohomology. $2$-automorphisms are given by $\exp(\hbar R\brhh)= \{ r(\hbar) \in 1+ \hbar R \brh ~:~  r(\hbar)r(-\hbar)=1\}$, with $\hbar^2s(\hbar^2) \in \hbar^2R\brhh \cong \hbar^2\H^0\DR(A)\brhh$ corresponding under Theorem \ref{quantpropsd} to the $2$-automorphism $\exp(\int s(\hbar^2)d\hbar)$.

In detail, for a fixed non-degenerate Poisson structure $\pi$, self-dual quantisations $(A\brh, \star_{\hbar})$ (with involution $a(\hbar)^* := a(-\hbar)$)  are unique up to involutive isomorphism (i.e. $\theta$ with $\theta(a^*)=\theta(a)^*$).  Those isomorphisms are unique up to involutive inner automorphism (i.e. conjugation by $\{ a(\hbar) \in 1+ \hbar A \brh ~:~  a(\hbar)\star_{\hbar}a(-\hbar)=1\}$) and the inner automorphism $a(\hbar)$ (regarded as a $2$-morphism) is unique up to multiplication by $\exp(\hbar R\brhh)$.
\end{example}

\begin{remark}\label{oddcoeffsrmk}
The proof of Theorem  \ref{quantpropsd}  shows that for a  self-dual quantisation of a non-degenerate $0$-shifted Poisson structure, the $w$-compatible generalised symplectic structure is determined by its even coefficients. This raises the question of whether  the odd coefficients must be homotopic to $0$, as happens in the $(-1)$-shifted case by  \cite[Remark \ref{DQvanish-oddcoeffsrmk}]{DQvanish}. The answer depends on the choice of $w$, as follows.

The involution $i$ from Lemma \ref{involutiveHH} is not just a DGLA automorphism. If we write $f^t:= -i(f)$, then $(f\cdot g)^t= (-1)^{\deg f\deg g} g^t\cdot f^t$ and $\{f\}\{g_1, \ldots, g_m\}^t \simeq \mp \{f^t\}\{g_m^t, \ldots , g_1^t\}$, so $(-)^t\co \CC^{\bt}_R(A)^{\op} \to \CC^{\bt}_R(A)$ makes $\CC^{\bt}_R(A)$ into an anti-involutive brace algebra. The opposite brace algebra $B^{\op}$ is most easily understood in terms of the associated $B_{\infty}$-algebra, which is a bialgebra structure on the tensor coalgebra $T(B[1])$: to form $B^{\op}$, we just take the opposite  comultiplication on $T(B[1])$.

We can define an involution of the $E_2$ operad similarly, which takes an embedding $[1,k] \by I^2 \to I^2$ of $k$ little squares in a big square, and reverses the order of the labels $[1,k]$ with appropriate signs. This involution comes from an element $t \in \GT$ which maps to $-1 \in \bG_m$. It gives a notion of opposite $E_2$-algebra, with $(-)^t\co \CC^{\bt}_R(A)^{\op} \to \CC^{\bt}_R(A) $ then giving $\CC^{\bt}_R(A) $ the structure of an anti-involutive $E_2$-algebra.

Levi decompositions $w$ of $\GT$ with $w(-1)=t$ form a torsor $\Levi_{\GT}^t$  for the subgroup $(\GT^1)^t$ of $t$-invariants in $\GT^1$. (To see that $\Levi_{\GT}^t$ is non-empty, first pick any Levi decomposition $w_0$, and write $w_0(-1)=tu$ for $u\in \GT^1$. Since $t$ and $w_0(-1)$ are both of order $2$, we have $u= \ad_t(u^{-1})$, so $u^{\half}=\ad_t(u^{-\half})$, giving $w:=\ad_{u^{-\half}}\circ w_0 \in \Levi_{\GT}^t$.) Under the isomorphism $\Ass^1 \cong \Levi_{\GT}$ between associators and Levi decompositions, elements of $\Levi_{\GT}^t$ correspond to even associators.  

For any such $w\in \Levi_{\GT}^t(\Q)$, the $\infty$-functor $p_w$ sends  opposite $E_2$-algebras to  opposite $P_2$-algebras, defined by reversing the sign of the Lie bracket. This gives  
\[
 \mu_w(\omega, \Delta)^t= \mu_w(\omega, -\Delta^t),
\]
so $\omega(\hbar)$ is compatible with $\Delta$ if and only if $\omega(-\hbar)$ is compatible with $\Delta^*$, implying that the odd coefficients of  $\omega$ must be homotopic to $0$ when $\Delta$ is non-degenerate and self-dual.

For a more explicit description of the generalised symplectic structure $\omega$ corresponding to a non-degenerate self-dual quantisation $\Delta$,  observe that we then have an isomorphism
\begin{align*}
 \mu_w(-, \Delta) \co  &\H^*( F^2\DR(X) \by  \hbar^2\DR(X)\llbracket\hbar^2\rrbracket) \\
&\to \{v \in \H^*(T_{\Delta}(\tilde{F}^2Q\widehat{\Pol}(A,0))~:~ v(-\hbar)= v^t(\hbar)\},
\end{align*}
and that $[\omega]$ must be  the inverse image of $[\hbar^2 \frac{\pd \Delta}{\pd \hbar}]$.

\end{remark}

\begin{remark}\label{quantsdLevirmk}
Similarly to Remarks \ref{Levirmk} and \ref{quantLevirmk}, we could consider the space $ \oR \Levi_{\GT}^t(R)$ of $R$-linear Levi decompositions with $w(-1)=t$, and the proof of Theorem  \ref{quantpropsd} then combines with  Remark \ref{oddcoeffsrmk} to give a canonical weak equivalence
\[
  \oR \Levi_{\GT}^t(R) \by  Q\cP(A,0)^{\nondeg,sd} \simeq  \oR \Levi_{\GT}^t(R) \by \cP(A,0)^{\nondeg} \by \mmc(\hbar^2 \DR(X)\llbracket\hbar^2\rrbracket[1]).
\]
over $  \oR \Levi_{\GT}^t(R)$.
\end{remark}

\subsection{Comparison with Kontsevich--Tamarkin quantisations}

In \cite{kontsevichDQAlgVar}, Kontsevich showed that for a smooth algebraic variety $X$  over a field $k$ of characteristic $0$,  every Poisson structure $\pi$ lifts to an algebroid quantisation of $X$. We now investigate how this quantisation relates to our quantisations above when $\pi$ is non-degenerate and $X$ affine; by descent, this comparison will extend to the global quantisations of the next section. 
Unlike \cite{kontsevichPoisson}, the approach of \cite{kontsevichDQAlgVar}  does not start from  a specific local quantisation, instead  giving a construction dependent on a  choice of explicit quantisation formula over $k$, which is stated to depend on  a choice of Drinfeld associator with coefficients in $k$. 

Tamarkin's approach \cite{tamarkinOperadicKontsevichFormality} to quantisation is more suited to comparison with our constructions; although it is formulated for manifolds, it can also be adapted to algebraic varieties compatibly with \cite{kontsevichDQAlgVar}, as indicated in  \cite[Remark 8.2.1]{vdBerghGlobalDQ}. It relies on the choice of a Drinfeld associator (or equivalently on a Levi decomposition $w$ of $\GT$). As in  \cite{kontsevichOperads} or \cite[proof of Theorem 9.5.1]{vdBerghGlobalDQ}, the key is the existence of a canonical quasi-isomorphism 
\[
\phi_w \co  \prod_{i \ge 0}\Hom_A(\Omega^i_{A},A)[-i]\simeq p_w\CCC^{\bt}_k(A)
\]
 of filtered  $P_{2, \infty}$-algebras, lifting the HKR isomorphism. 

The quasi-isomorphism gives  a $k\llbracket \hbar\rrbracket$-linear  $P_{2, \infty}$-algebra quasi-isomorphism
\[
 \widehat{\Pol}(A,0)\llbracket \hbar \rrbracket\simeq Q\widehat{\Pol}_w(A,0)
\]
sending the filtration  $ \{ \prod_i  F^{p-i}\hbar^{i} \}_p$ on the left to $\{\tilde{F}^p\}_p$, and inclusion of $\widehat{\Pol}(A,0)$ on the left then gives rise to the quantisation map $\phi_w \co \cP(A,0) \to Q\cP_w(A,0)$ on Maurer--Cartan spaces.

This allows us to make a comparison with our constructions:
\begin{lemma}\label{compKTlemma}
For $A$ smooth over a field $k \supset \Q$ and $w \in \Levi_{\GT}(k)$,  the map $\phi_w \co \cP(A,0) \to Q\cP_w(A,0)$ from Poisson structures to quantisations, following   \cite[Remark 8.2.1 and Theorem 9.5.1]{vdBerghGlobalDQ}, extends to map
\begin{align*}
   \Comp(A,0) &\to   Q\Comp_w(A,0),\\
(\omega, \pi) &\mapsto (\omega, \phi_w(\pi))
\end{align*}
from compatible pairs to $w$-compatible pairs.
\end{lemma}
\begin{proof}
  Functoriality of $\mu$ implies that $\mu_w(\omega, \phi_w(\pi))= \phi_w(\mu(\omega, \pi))$, so $\phi_w(\pi)$ is $w$-compatible with a pre-symplectic form $\omega$ whenever  $\pi$ is compatible with $\omega$.
\end{proof}

When $w$ comes from an even associator, we then have:
\begin{proposition}\label{cfKT}
For a smooth algebra $A$ over a field $k \supset \Q$ and for $w \in \Levi_{\GT}^t(k)$,
 the map  $\phi_w$ restricts to a map $ \phi_w \co \cP(A,0)^{sd} \to Q\cP_w(A,0)^{sd}$, i.e. the Tamarkin quantisation $\phi_w(\pi)$ of any Poisson structure $\pi$ on $A$ is self-dual. When $\pi$ is non-degenerate, the  quantisation $\phi_w(\pi)$ corresponds under Theorem \ref{quantpropsd} to the constant de Rham  power series $\pi^{-1}$.
\end{proposition}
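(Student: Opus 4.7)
The proposition has two assertions: self-duality of $\phi_w(\pi)$ and the identification of its associated generalised pre-symplectic structure with the constant power series $\pi^{-1}$. Both are essentially packaged by the machinery assembled in the two paragraphs preceding the statement.

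For self-duality when $w(-1) = t$, I would directly invoke the involutive formality quasi-isomorphism $\phi \co \widehat{\Pol}(A,0)\llbracket \hbar^2 \rrbracket \simeq Q\widehat{\Pol}_w(A,0)^{sd}$ constructed in the preceding discussion via \cite[Theorem 1.1]{vdBerghGlobalDQ}, restricted to the subcomplex of odd weight through the vanishing argument of \cite[\S 3.4]{kontsevichOperads}. Since this is an $L_{\infty}$-quasi-isomorphism of filtered DGLAs, it induces a weak equivalence on the associated Maurer--Cartan spaces, sending $\pi \in \cP(A,0) \hookrightarrow \widehat{\Pol}(A,0)\llbracket \hbar^2 \rrbracket$ to $\phi_w(\pi) \in Q\cP(A,0)^{sd}$. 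Non-degeneracy is immediate from non-degeneracy of the semiclassical limit, which is $\pi$ itself.

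For the second assertion, I would rely on the functoriality identity $\mu_w(\omega, \phi_w(\pi)) = \phi(\mu(\omega, \pi))$ noted in the paragraph above the proposition, combined with the classical $n=0$ Poisson--symplectic correspondence from \cite[Corollary \ref{poisson-compatcor2}]{poisson}: for a non-degenerate $\pi$, the inverse $\omega := \pi^{-1}$ is the essentially unique pre-symplectic form classically compatible with $\pi$. Regarding $\pi^{-1}$ as a constant element of $\DR(A/R)\llbracket \hbar \rrbracket$, the $k\llbracket \hbar \rrbracket$-linearity of $\phi$ transports this classical compatibility across the formality isomorphism to a $w$-compatibility of the pair $(\pi^{-1}, \phi_w(\pi))$ in the sense of Definition \ref{Qcompatdef}. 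By Proposition \ref{QcompatP1}, $\pi^{-1}$ is therefore the essentially unique generalised pre-symplectic structure $w$-compatible with $\phi_w(\pi)$.

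Finally, unwinding the equivalence of Theorem \ref{quantpropsd}, which sends a non-degenerate self-dual quantisation to the pair consisting of its semiclassical Poisson structure and the higher-order tail in $\mmc(\hbar^2 \DR(A/R)\llbracket \hbar^2 \rrbracket[1])$ of its $w$-compatible symplectic form, we conclude that $\phi_w(\pi)$ maps to $(\pi, 0)$, corresponding exactly to the constant de Rham power series $\pi^{-1}$ in the formulation of the proposition. The genuine content beyond formal bookkeeping is contained in the involutive formality step: constructing a section $\phi$ compatible simultaneously with the filtration, the $k\llbracket \hbar \rrbracket$-linear structure, and the involution $*$, so that the functoriality of $\mu$ can be transported across the quasi-isomorphism. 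This is precisely where the hypothesis $w(-1) = t$ enters, through the odd-weight deformation complex of \cite[\S 3.4]{kontsevichOperads}.
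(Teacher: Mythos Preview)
Your proposal is correct and follows essentially the same approach as the paper: the paper's proof is entirely contained in the two paragraphs preceding the proposition (ending with ``so we have shown the following''), and you have accurately unpacked that discussion, invoking the involutive formality quasi-isomorphism for self-duality when $w(-1)=t$, the functoriality identity $\mu_w(\omega,\phi_w(\pi))=\phi_w(\mu(\omega,\pi))$ together with $k\llbracket\hbar\rrbracket$-linearity of $\phi$ to transport classical compatibility, and Proposition~\ref{QcompatP1} for uniqueness of the compatible form. Your identification of where the hypothesis $w\in\Levi_{\GT}^t$ enters (the odd-weight deformation complex argument) is also exactly the paper's reasoning.
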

\begin{proof}
The global formality quasi-isomorphisms of \cite{vdBerghGlobalDQ} depend only on a choice of quasi-isomorphism 
in the  formal case, i.e. replacing $A$ with  the pro-algebra $k\llbracket t_1, \ldots, t_d\rrbracket$ when $A$ has dimension $d$. Tamarkin's approach to quantisation, as described in \cite{kontsevichOperads}, relies on showing that the equivalence class of $P_2$-algebra deformations of $\Pol(k[ t_1, \ldots, t_d],0)$ 
invariant under affine transformations is trivial. The same is true for the equivalence class  of  anti-involutive $P_2$-algebra deformations, replacing the deformation complex of  \cite[\S 3.4]{kontsevichOperads} with its subspace of odd weight. 

When $w \in \Levi_{\GT}^t(k)$, the involution $i$ of Lemma \ref{involutiveHH} gives an anti-involution $-i$ on the  $P_2$-algebra  $p_w\CCC^{\bt}_k(A)$, and the argument above shows that the map $\phi_w$ is compatible with the involutions, so we have $\phi_w \co \widehat{\Pol}(A,0)\llbracket \hbar^2 \rrbracket \simeq Q\widehat{\Pol}_w(A,0)^{sd}$, giving the restriction claimed.

The map of  Lemma \ref{compKTlemma} then restricts to give a morphism  $\Comp(A,0) \to   Q\Comp_w(A,0)^{sd}$, 
and further restriction to non-degenerate elements gives the correspondence between  $\phi_w(\pi)$ and $\pi^{-1}$ via Theorem \ref{quantpropsd}.
\end{proof}

\begin{remark}\label{nonfedosovrmk}
Extending Theorem \ref{quantpropsd} to give existence of quantisations for degenerate Poisson structures on more general stacky CDGAs $A$ requires  an alternative  to \cite{kontsevichDQAlgVar}. In \cite{DQpoisson}, this is established for finitely presented chain CDGAs (and hence derived Deligne--Mumford stacks). Instead of looking at quantisations of $k\llbracket t_1, \ldots, t_d\rrbracket$,  the problem is rigidified there by observing that $p_w\CCC^{\bt}_k(A)$ is an \emph{involutive} filtered deformation of the $P_2$-algebra $\widehat{\Pol}(A,0)$ whenever  
$w$ is even.
 Calculations based on the method of  \cite{poisson} then show that the $\infty$-groupoid of  deformations of  $\widehat{\Pol}(A,0)$ as an anti-involutive filtered $P_2$-algebra is contractible, giving the desired quasi-isomorphism $p_w\CCC^{\bt}_k(A) \simeq \widehat{\Pol}(A,0)$.
\end{remark}

\section{Quantisation for derived  stacks}\label{stacksn}  

As in \cite[\S \ref{DQvanish-Artinsn}]{DQvanish}, in order to pass from stacky CDGAs to derived Artin stacks, we will exploit \'etale functoriality using Segal spaces.

\subsection{Quantised polyvectors  for diagrams}\label{Artindiagsn}

\begin{definition} 
Given a small category $I$,  an $I$-diagram $A$ in $DG^+dg\CAlg(R)$, and an  $A$-module $M$ in $I$-diagrams of chain cochain complexes, define the filtered Hochschild cochain complex $\CCC^{\bt}_R(A,M)$ to be the equaliser of the obvious diagram
\[
\prod_{i\in I} \CCC^{\bt}_R(A(i),M(i)) \implies \prod_{f\co i \to j \text{ in } I} \CCC^{\bt}_R(A(i),M(j)),
\]
with the filtration $F_k\CCC^{\bt}_R(A,M)$ defined similarly, for the Hochschild complexes of Definition \ref{HHdef}.

We then write $\CCC^{\bt}_R(A):= \CCC^{\bt}_R(A,A)$,   which  inherits the structure of  a brace algebra from  each $\CCC^{\bt}_R(A(i),A(i))$. 
\end{definition}

For $f \co i \to j$ a morphism in $I$, observe that the HKR  maps 
\[
 \gr^F_{k}\CCC^{\bt}_R(A(i),f_*M(j))   \to \hat{\HHom}_{A(i)}(\Omega^k_{A(i)},f_*M(j))
\]
are quasi-isomorphisms whenever $A(i)$ is  cofibrant in the model structure of Lemma \ref{bicdgamodel}. 
Also note that if $u \co I \to J$ is a morphism of small categories and $A$ is a functor from  $J$ to  $DG^+dg\CAlg(R)$ with $B= A \circ u$, then we have a natural map $ \CCC^{\bt}_R(A) \to \CCC^{\bt}_R(B)$.

In order to ensure that $\CCC^{\bt}_R(A,M)$ has the correct homological properties, we now consider categories  of  the form $[m]= (0 \to 1 \to \ldots \to m)$.
\begin{lemma}\label{calcCClemma}
 If $A$ is an  $[m]$-diagram in  $DG^+dg\CAlg(R)$  which is cofibrant and  fibrant for the injective model structure (i.e. each $A(i)$ is cofibrant in the model structure of Lemma \ref{bicdgamodel} and the maps $A(i) \to A(i+1)$ are surjective), then
$
 \gr^F_{k}\CCC^{\bt}_R(A)
$
is a model for the derived $\Hom$-complex $\oR \hat{\HHom}_A(\oL\Omega^k_A,A)$. 
\end{lemma}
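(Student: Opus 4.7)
The plan is to combine a levelwise application of the Hochschild--Kostant--Rosenberg map with a model-theoretic argument identifying the resulting strict Hom-complex of the diagram with its derived version.

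First, I would apply HKR level-by-level. Cofibrancy of each $A(i)$ together with Assumption \ref{biCDGAprops}, via the argument underlying Lemma \ref{gradedHH}, gives quasi-isomorphisms
\[
\gr^F_k \CCC^{\bt}_R(A(i), A(j)) \xrightarrow{\simeq} \hat{\HHom}_{A(i)}(\Omega^k_{A(i)/R}, A(j))
\]
for every morphism $f\co i \to j$ in $[m]$, where $A(j)$ is made an $A(i)$-module through $f$. These assemble into a morphism of the defining equalizer diagrams, yielding a quasi-isomorphism
\[
\gr^F_k \CCC^{\bt}_R(A) \xrightarrow{\simeq} \hat{\HHom}_A(\Omega^k_A, A),
\]
with $\hat{\HHom}_A(-,-)$ on the right denoting the strict internal Hom of $A$-module diagrams, i.e.\ the equalizer of $\prod_i\hat{\HHom}_{A(i)}(\Omega^k_{A(i)/R},A(i)) \rightrightarrows \prod_{f\co i\to j}\hat{\HHom}_{A(i)}(\Omega^k_{A(i)/R},A(j))$.

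Second, I would identify this strict Hom with its derived version. Since $[m]$ is a finite linear poset, the equalizer unwinds into an iterated pullback whose backward maps have the form
\[
\hat{\HHom}_{A(i+1)}(\Omega^k_{A(i+1)/R}, A(i+1)) \longrightarrow \hat{\HHom}_{A(i)}(\Omega^k_{A(i)/R}, A(i+1)).
\]
Injective cofibrancy makes each $A(i) \to A(i+1)$ a cofibration of stacky CDGAs, so $\Omega^k_{A(i)/R}\otimes_{A(i)} A(i+1) \to \Omega^k_{A(i+1)/R}$ is a cofibration of cofibrant $A(i+1)$-modules; combined with surjectivity of the transition map $A(i)\to A(i+1)$, this makes the displayed restriction a surjection of chain cochain complexes, hence a fibration in the relevant model structure. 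With every backward map a fibration between fibrant objects, the strict iterated pullback coincides with the homotopy pullback, and the diagram $i\mapsto \Omega^k_{A(i)/R}$ models $\oL\Omega^k_A$ under the injective cofibrancy hypothesis, giving the desired identification with $\oR\hat{\HHom}_A(\oL\Omega^k_A,A)$.

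The main obstacle is the second step: controlling the passage from strict to homotopy limit by verifying surjectivity of the restriction maps. This is precisely where the two halves of the hypothesis enter as a unit --- the fibrancy (surjectivity of $A(i)\to A(i+1)$) provides the target-side surjection, while the cofibrancy (the map $A(i)\to A(i+1)$ is a retract of a free extension) provides the source-side lifting property for polyderivations, via the standard adjunction between cofibrations in the source and fibrations in the target inside $\hat{\HHom}$. Once surjectivity of the zig-zag is in hand, the remaining identification is routine homotopy-limit bookkeeping for $[m]$-shaped diagrams.
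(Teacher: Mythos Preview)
Your overall strategy matches the paper's: reduce to HKR plus the identification $\hat{\HHom}_A(\Omega^k_{A/R},A)\simeq\oR\hat{\HHom}_A(\oL\Omega^k_A,A)$ for $[m]$-diagrams (the paper simply cites \cite[Lemma \ref{poisson-calcTOmegalemma}]{poisson} for the latter). However, your execution of the second step contains a genuine error.

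You write that ``injective cofibrancy makes each $A(i)\to A(i+1)$ a cofibration of stacky CDGAs'' and later that cofibrancy means ``the map $A(i)\to A(i+1)$ is a retract of a free extension''. This is a misreading of the injective model structure: cofibrant there means that each \emph{object} $A(i)$ is cofibrant (i.e.\ $R\to A(i)$ is a cofibration), not that the transition maps are cofibrations. Indeed the parenthetical in the statement tells you the transition maps $A(i)\to A(i+1)$ are \emph{surjective} --- that is the fibrancy condition --- and a surjective cofibration of CDGAs would have to be an isomorphism. Consequently the map $\Omega^k_{A(i)/R}\ten_{A(i)}A(i+1)\to\Omega^k_{A(i+1)/R}$ is a \emph{surjection} (right-exactness of K\"ahler differentials), not a cofibration, and your backward restriction map
\[
\hat{\HHom}_{A(i+1)}(\Omega^k_{A(i+1)/R},A(i+1))\to\hat{\HHom}_{A(i)}(\Omega^k_{A(i)/R},A(i+1))
\]
is an \emph{injection}, not a fibration. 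Your argument that the strict iterated pullback is a homotopy pullback therefore collapses.

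The fix is to look at the other leg of each pullback square: the \emph{forward} map
\[
\hat{\HHom}_{A(i)}(\Omega^k_{A(i)/R},A(i))\to\hat{\HHom}_{A(i)}(\Omega^k_{A(i)/R},A(i+1))
\]
given by post-composition with the surjection $A(i)\to A(i+1)$. Here levelwise cofibrancy of $A(i)$ makes $\Omega^k_{A(i)/R}$ a cofibrant $A(i)$-module, so mapping out of it preserves surjections, and this map is a fibration. That is enough to make each pullback a homotopy pullback. You should also note that the same forward-map argument applies verbatim on the Hochschild side (with $A(i)^{\ten_R n}$ cofibrant over $R$), which you need in order to pass the levelwise HKR quasi-isomorphisms through the equaliser in step~1; as written, your step~1 asserts a quasi-isomorphism of equalisers without justifying that either side computes a homotopy limit.
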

\begin{proof}
The proof of  \cite[Lemma \ref{poisson-calcTOmegalemma}]{poisson} adapts verbatim to stacky CDGAs to give   $\hat{\HHom}_A(\Omega^k_{A},A) \simeq \oR\hat{\HHom}_A(\oL\Omega^k_A,A)$, from which this result follows immediately via the HKR isomorphism. 
 \end{proof}

The constructions in \S \ref{defsn} now all carry over verbatim, generalising from cofibrant stacky CDGAs to $[m]$-diagrams of stacky CDGAs which are cofibrant and  fibrant for the injective model structure. In order to identify $Q\cP/G^1$ with  $\cP$, and for  notions such as non-degeneracy to make sense, we have to  assume that for our fibrant cofibrant  $[m]$-diagram $A$ of stacky CDGAs, each $A(j)$ satisfies Assumption \ref{biCDGAprops}, so there exists $N$ for which the chain complexes $(\Omega^1_{A(j)}\ten_{A(j)}A(j)^0)^i $ are acyclic for all $i >N$.

\begin{definition}\label{ICompdef}
Given an $[m]$-diagram $A$ satisfying the conditions of Lemma \ref{calcCClemma},   define  
\[
 G\PreSp(A,0):=  G\PreSp(A(0),0)= \Lim_{i\in [m]}  G\PreSp(A(i),0),
\]
for the space $G\PreSp$ of generalised pre-symplectic structures of Definition \ref{GPreSpdef}. 

 Given a choice $w \in \Levi_{\GT}(\Q)$ of Levi decomposition for $\GT$, define 
\[
 \mu_w \co G\PreSp(A,0) \by Q\cP_w(A,0) \to TQ\cP_w(A,0) 
\]
by setting $\mu_w(\omega, \Delta)(i):= \mu_w(\omega(i), \Delta(i)) \in TQ\cP_w(A(i),0)$ for $i \in [m]$, and let $ Q\Comp_w(A,0)$ be the homotopy vanishing locus of
\[
(\mu_w - \sigma) \co  G\PreSp(A,0) \by Q\cP_w(A,0) \to  TQ\cP_w(A,0).
\]
over $Q\cP_w(A,0)$.
\end{definition}

The following is  \cite[Lemma \ref{poisson-calcTlemma2}]{poisson}:
\begin{lemma}\label{bicalcTlemma2}
If $D=(A\to B)$ is a fibrant cofibrant  $[1]$-diagram in $DG^+dg\CAlg(R)$ which is formally \'etale in the sense that the map
\[
 \{\Tot \sigma^{\le q} (\Omega_{A}^1\ten_{A}B^0)\}_q \to \{\Tot \sigma^{\le q}(\Omega_{B}^1\ten_BB^0)\}_q
\]
is a pro-quasi-isomorphism, then the map    
\[
 \hat{\HHom}_D(\Omega^k_D,D) \to \hat{\HHom}_{A}(\Omega^k_{A},A),
\]
is a quasi-isomorphism for all $k$.
\end{lemma}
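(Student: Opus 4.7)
The plan is to describe $\hat{\HHom}_D(\Omega^k_D,D)$ as an equaliser over the arrows of $[1]$ and reduce the claim to a statement purely about the $B$-side of the diagram. Since $\Omega^k_D$ is the $[1]$-diagram $(\Omega^k_A \to \Omega^k_B)$, the equaliser definition gives a homotopy pullback
\[
\hat{\HHom}_D(\Omega^k_D,D) = \hat{\HHom}_A(\Omega^k_A,A) \by_{\hat{\HHom}_A(\Omega^k_A,B)} \hat{\HHom}_B(\Omega^k_B,B),
\]
where the right-hand map is induced by the natural map $\Omega^k_A \ten_A B \to \Omega^k_B$ combined with the adjunction $\hat{\HHom}_B(-\ten_A B,-)=\hat{\HHom}_A(-,-)$. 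The projection onto the first factor is then a quasi-isomorphism as soon as the induced map $\hat{\HHom}_B(\Omega^k_B,B) \to \hat{\HHom}_B(\Omega^k_A\ten_A B, B)$ is one, which is what remains to verify.

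For this remaining statement, I would first reduce to $k=1$ using the $A^{\#}$-cofibrancy of $(\Omega^1_A)^{\#}$ from Assumption \ref{biCDGAprops}(2) together with $\Omega^k=\Lambda^k\Omega^1$, via a Koszul/wedge-power spectral sequence. In degree $k=1$, the map is governed by the derived cone $\cone(\Omega^1_A\ten_A B \to \Omega^1_B)$, and one must show that applying $\hat{\HHom}_B(-,B)$ to this cone yields an acyclic complex. Assumption \ref{biCDGAprops}(3) bounds the effective cochain amplitude of $\Omega^1$, so $\hat{\HHom}_B(-,B)$ can be computed as an inverse limit over the brutal cotruncations $\Tot\sigma^{\le q}$ of its source. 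Since $D$ is fibrant-cofibrant in the injective model structure, $B$ is built from $B^0$ by attaching cells in strictly positive cochain degree, allowing one to replace $B$ with $B^0$ in the target up to a controlled filtration; the problem then reduces precisely to showing that the pro-system of Definition \ref{hfetdef} is a pro-quasi-isomorphism, which is the hypothesis.

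The main obstacle is the careful accounting of the mixed sum-product structure of $\hat{\Tot}$ against the pro-quasi-isomorphism hypothesis: $\hat{\Tot}$ uses direct sums in negative cochain degrees and direct products in non-negative ones, and this particular combination is exactly what converts pro-quasi-isomorphisms on source brutal cotruncations into honest quasi-isomorphisms on target $\hat{\HHom}$ groups, and must be checked on each associated graded step of the cotruncation filtration. A close analogue is the non-stacky statement \cite[Lemma \ref{poisson-calcTlemma2}]{poisson}, and the argument in the stacky setting follows by applying $\hat{\Tot}$ termwise to the cochain constructions in loc. cit.
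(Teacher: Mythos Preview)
The paper does not prove this lemma here: the sentence immediately preceding it reads ``The following is \cite[Lemma \ref{poisson-calcTlemma2}]{poisson}'', so the result is simply imported. Your closing remark therefore has the relationship inverted --- what you call a ``close analogue'' is in fact the very statement being quoted, not a non-stacky precursor.

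As for the argument itself, your first two steps are sound. The equaliser description of $\hat{\HHom}_D$ over $[1]$ is correct, and since $A\to B$ is surjective (injective fibrancy) while $(\Omega^k_A)^{\#}$ is cofibrant over $A^{\#}$, the strict pullback is a homotopy pullback; the reduction to showing that
\[
\hat{\HHom}_B(\Omega^k_B,B)\to \hat{\HHom}_B(\Omega^k_A\ten_A B,B)
\]
is a quasi-isomorphism is then valid.

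The order of your remaining steps is the main issue. You propose to reduce to $k=1$ and only then invoke the pro-quasi-isomorphism hypothesis, but the property ``becomes a quasi-isomorphism after $\hat{\HHom}_B(-,B)$'' does not formally pass to exterior powers: if $Q=\coker(\Omega^1_A\ten_AB\to\Omega^1_B)$, acyclicity of $\hat{\HHom}_B(Q,B)$ alone does not give acyclicity of $\hat{\HHom}_B(M\ten_B Q,B)$ for the cross-terms appearing in the filtration of $\Lambda^k$. The cleaner route is to reverse the order: first observe that the formally \'etale hypothesis for $\Omega^1$ propagates to $\Omega^k$, since exterior powers commute with $-\ten_B B^0$ and preserve levelwise quasi-isomorphisms of levelwise projective modules, so
\[
\{\Tot\sigma^{\le q}(\Omega^k_A\ten_A B^0)\}_q\to\{\Tot\sigma^{\le q}(\Omega^k_B\ten_B B^0)\}_q
\]
is again a pro-quasi-isomorphism; then run the $\hat{\Tot}$/brutal-cotruncation argument once, uniformly in $k$. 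Your description of that final step --- using the boundedness from Assumption \ref{biCDGAprops}(3) to express $\hat{\HHom}_B(-,B)$ as a limit over brutal cotruncations against $B^0$, so that the pro-quasi-isomorphism hypothesis applies directly --- is on the right track but is precisely the substance of the proof and would need to be written out rather than asserted.
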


As in \cite[\S \ref{poisson-bidescentsn}]{poisson}, for any of the constructions $F$ based on $Q\cP$, 
\cite[Definition \ref{poisson-inftyFdef}]{poisson} adapts  to give
an $\infty$-functor
\[
 \oR F \co \oL DG^+dg\CAlg(R)^{\et} \to  \oL s\Set
\]
with $(\oR F)(A) \simeq F(A)$
for all cofibrant stacky CDGAs $A$ over $R$, where $DG^+dg\CAlg(R)^{\et} \subset DG^+dg\CAlg(R)$ is the subcategory of homotopy formally \'etale morphisms.

Immediate consequences of Propositions \ref{QcompatP1} and \ref{quantprop} and Theorem \ref{quantpropsd} are that for any $w \in \Levi_{\GT}(\Q)$, the canonical maps 
\begin{align*}
\oR Q\Comp_w(-,0)^{\nondeg} &\to \oR Q\cP_w(-,0)^{\nondeg}\simeq \oR Q\cP(-,0)^{\nondeg} \\
\oR Q\cP_w(-,0)^{\nondeg}/G^j &\to (\oR Q\cP_w(-,0)^{\nondeg}/G^2)\by^h_{\oR(G\Sp(-,0)/G^2)}\oR(G\Sp(-,0)/G^j) \\ 
 \oR Q\cP_w(-,0)^{\nondeg,sd} &\simeq \oR\cP(-,0)^{\nondeg} \by \mmc(\hbar^2 \DR(-)\llbracket\hbar^2\rrbracket[1])
\end{align*}
 are weak equivalences of $\infty$-functors on the full subcategory of $\oL DG^+dg\CAlg(R)^{\et}$ consisting of objects satisfying the conditions of Assumption \ref{biCDGAprops}, for all $j \ge 2$.

\subsection{Hypergroupoids}

We now recall the main constructions from \cite{stacks2}, as summarised in \cite[\S \ref{poisson-hgpdsn}]{poisson}.

We  require our chain CDGA $R$ over $\Q$ to be concentrated in non-negative chain degrees, and write   $dg_+\CAlg(R)\subset dg\CAlg(R) $ for the full subcategory of chain CDGAs  which are concentrated in non-negative chain degrees. We denote the opposite category to $dg_+\CAlg(R) $ by $DG^+\Aff_R$. Write $sDG^+\Aff_R$ for the category of simplicial diagrams in $DG^+\Aff_R $. A morphism in $DG^+\Aff_R $ is said to be a fibration if it is given by a cofibration in the opposite category $dg_+\CAlg(R)$.

\begin{definition} 
Given $Y \in sDG^+\Aff_R$, a DG Artin  $N$-hypergroupoid $X$ over $Y$ is a morphism $X \to Y$ in $sDG^+\Aff_R$ for which:
\begin{enumerate}
 \item the matching maps
$$
X_m \to M_{\pd \Delta^m} (X)\by_{M_{\pd \Delta^m} (Y)}Y_m 
$$
are fibrations for all $m\ge 0$;

\item  the   partial matching maps
$$
X_m \to M_{\Lambda^m_k} (X)\by_{M^h_{\Lambda^m_k} (Y)}^hY_m 
$$
are  smooth surjections for all $m \ge 1$ and $k$, and are weak equivalences for all $m>N$ and all $k$.
\end{enumerate}

A morphism $X\to Y$ in $sDG^+\Aff_R$ is a  trivial DG Artin (resp. DM)  $N$-hypergroupoid if and only if the matching maps
$$
X_m \to M_{\pd \Delta^m} (X)\by_{M_{\pd \Delta^m} (Y)}Y_m 
$$
are surjective smooth  fibrations  for all  $m$, and are weak equivalences for all $m\ge n$.
\end{definition}

The following is \cite[Theorem \ref{stacks2-bigthm} and Corollary \ref{stacks2-Dequivcor}]{stacks2}, as spelt out in \cite[Theorem \ref{stacksintro-dbigthm}]{stacksintro}:

\begin{theorem}\label{dbigthm}
%
The $\infty$-category of strongly quasi-compact $N$-geometric derived Artin stacks  over $R$ is given by localising the category  of DG Artin  $N$-hypergroupoids  over $R$  at the class of trivial relative  DG Artin   $N$-hypergroupoids.
\end{theorem}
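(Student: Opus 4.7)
The result is quoted from \cite{stacks2} (building on \cite{stacksintro}), so I will outline the approach of those references rather than attempting a fresh argument. The strategy is a standard Brown-style representability/localisation, carried out level by level in the simplicial direction.

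First I would describe the comparison functor. Given a DG Artin $N$-hypergroupoid $X_\bullet$ over $R$, regard $X_\bullet$ as a simplicial object in the $\infty$-category of derived affines; its $\infty$-categorical colimit (or, equivalently, the hypersheafification of the associated simplicial presheaf) produces a derived stack $|X_\bullet|$. The fibration and weak-equivalence conditions on the partial matching maps ensure inductively that this colimit is a strongly quasi-compact $N$-geometric derived Artin stack: the bottom level $X_0$ gives a smooth atlas, and at each higher simplicial level the smoothness of the partial matching maps provides the required smooth covers of iterated fibre products, while the weak-equivalence condition for $m>N$ bounds the geometric level.

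Next I would show that trivial relative DG Artin $N$-hypergroupoids go to equivalences under $|\cdot|$. This is a cofinality argument: triviality of the matching maps implies that the map of simplicial diagrams is a levelwise smooth trivial fibration above level $N$, hence induces an equivalence on the associated hypersheaves. For the inverse direction, one starts with a strongly quasi-compact $N$-geometric derived Artin stack $\fX$, picks a smooth atlas $U_0 \to \fX$ by a derived affine, and then iteratively builds $U_{m+1}$ as a derived affine smooth cover of the fibre product $U_m \times_{M_{\partial\Delta^{m+1}}(U_\bullet)} M_{\partial \Delta^{m+1}}(U_\bullet)$ appropriately lifted over $\fX$; above level $N$ one can take the covers to be isomorphisms, which is possible precisely because $\fX$ is $N$-geometric so the homotopy fibres of matching maps stabilise.

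The two constructions are then inverse to one another in the localised homotopy category, essentially because any two hypergroupoid resolutions of the same stack admit a common refinement via a trivial hypergroupoid, obtained by taking pullbacks and again applying the smooth-atlas induction.

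The main obstacle, and the heart of \cite{stacks2}, is the inductive atlas construction: one must guarantee that at each simplicial level the chosen smooth cover is compatible with the boundary data already constructed, and that the CDGA model (cofibrancy and surjectivity in the Reedy sense) can be maintained in parallel with the geometric smoothness. This is where the homotopy-theoretic and the algebro-geometric sides of the statement interact most delicately, and where the distinction between DG Artin and DG Deligne--Mumford hypergroupoids becomes visible through the étale-versus-smooth condition on the partial matching maps.
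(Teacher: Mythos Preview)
The paper does not prove this theorem; it is stated with the preamble ``The following is \cite[Theorem \ref{stacks2-bigthm} and Corollary \ref{stacks2-Dequivcor}]{stacks2}, as spelt out in \cite[Theorem \ref{stacksintro-dbigthm}]{stacksintro}'' and then simply used. You correctly identify this, and your outline is a fair summary of the strategy of those references: construct the realisation functor by hypersheafification, check that trivial hypergroupoids are inverted because they are smooth hypercovers, and build hypergroupoid resolutions of a given $N$-geometric stack by inductively choosing smooth affine atlases for the relevant matching objects, with the $N$-truncation condition allowing the induction to terminate.

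Two minor points of precision. First, your description of the inverse construction has a garbled fibre product: one does not take $U_m \times_{M_{\partial\Delta^{m+1}}(U_\bullet)} M_{\partial\Delta^{m+1}}(U_\bullet)$, which is trivially $U_m$; rather one forms the matching object $M_{\partial\Delta^{m+1}}(U_\bullet)$ from the data already constructed, compares it with the corresponding object over $\fX$, and chooses $U_{m+1}$ as a smooth affine cover of the appropriate relative object. Second, the argument that trivial hypergroupoids go to equivalences is not really a cofinality argument but the statement that such morphisms are smooth hypercovers, hence inverted by hypersheafification. These are cosmetic issues in what is explicitly a sketch of externally cited material; there is no substantive gap.
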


Given a  DG Artin   $N$-hypergroupoid $X$, we denote the associated $N$-geometric derived Artin  stack by $X^{\sharp}$.
%
%

There is a denormalisation functor $D$ from non-negatively graded CDGAs to cosimplicial algebras, with 
 left adjoint $D^*$ as in \cite[Definition \ref{ddt1-nabla}]{ddt1}. 
Given a cosimplicial chain CDGA $A$, $D^*A$ is then a stacky CDGA, with $ (D^*A)^i_j=0$ for $i<0$. 
%

\subsection{Global quantisations}\label{globalquantnsn}

The following is \cite[Corollary \ref{poisson-gooddescent}]{poisson}, showing that a  DG Artin   $N$-hypergroupoid $X$ can be recovered from the stacky CDGAs $D^*O(X^{\Delta^j}) $; this should be thought of as a resolution by derived Lie algebroids.
\begin{lemma}\label{gooddescent}
For any simplicial presheaf  $F$ on $DG\Aff(R)$ and any Reedy fibrant simplicial derived affine $X$, there is a canonical weak equivalence 
\[
 \ho \Lim_{j\in\Delta} \map( \Spec DD^*O(X^{\Delta^j}), F) \to \map (X, F).
\]
\end{lemma}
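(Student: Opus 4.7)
The plan is to establish the lemma in two steps: first a Bousfield--Kan style descent identity expressing $\map(X,F)$ as a cosimplicial totalization over $j\in\Delta$, and then an identification of each term of this totalization with $\map(\Spec DD^*O(X^{\Delta^j}),F)$ via the adjunction $D^*\dashv D$. I would begin by using that for a Reedy fibrant simplicial derived affine $X$ and any simplicial presheaf $F$ on $DG\Aff(R)$, the cotensors $X^{\Delta^j}$ assemble into a Reedy fibrant cosimplicial object in $DG^+\Aff_R$, and the simplicial enrichment of presheaves gives a canonical weak equivalence
\[
\map(X,F)\;\simeq\;\ho\Lim_{j\in\Delta}\map(X^{\Delta^j},F).
\]
This is a standard Bousfield--Kan descent, with Reedy fibrancy of $X$ as its key input and with the fact that any simplicial presheaf preserves the relevant levelwise weak equivalences.

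Next I would exploit the adjunction $D^*\dashv D$ between cosimplicial chain CDGAs and stacky CDGAs, together with the Quillen property of $D^*$ recorded in \cite[Lemma \ref{poisson-Dstarlemma}]{poisson}.  For the cosimplicial chain CDGA $O(X^{\Delta^j})$, which is Reedy cofibrant by fibrancy of $X$, the unit $O(X^{\Delta^j})\to DD^*O(X^{\Delta^j})$ of the adjunction is a weak equivalence of cosimplicial chain CDGAs: the adjunction $D^*\dashv D$ refines the Dold--Kan correspondence, so the underlying unit is a quasi-isomorphism, and this persists at the CDGA level after cofibrant replacement.  Applying $\Spec$ yields a natural weak equivalence $\Spec DD^*O(X^{\Delta^j})\to X^{\Delta^j}$ of simplicial presheaves on $DG\Aff(R)$, and mapping into $F$ preserves this, producing the desired natural equivalence $\map(\Spec DD^*O(X^{\Delta^j}),F)\simeq\map(X^{\Delta^j},F)$.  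Composing with the Bousfield--Kan identification and taking $\ho\Lim_{j\in\Delta}$ gives the required canonical weak equivalence.

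The main obstacle will be verifying that the unit $O(X^{\Delta^j})\to DD^*O(X^{\Delta^j})$ really is a weak equivalence of simplicial presheaves on $DG\Aff(R)$, naturally in $j$.  This requires the $D^*\dashv D$ adjunction to behave as a sufficiently strong Quillen equivalence in a form compatible with the Reedy structure on the cosimplicial variable $j$, and it requires $O(X^{\Delta^j})$ to be cofibrant enough for this property to be activated.  All the remaining ingredients---the Bousfield--Kan identification, the naturality of the adjunction unit, and the preservation of weak equivalences under $\map(-,F)$---are formal once this core comparison has been established.
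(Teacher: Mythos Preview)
The paper does not prove this lemma here: it is quoted verbatim as \cite[Corollary \ref{poisson-gooddescent}]{poisson}, so there is no in-paper argument to compare against. Your two-step outline --- Bousfield--Kan resolution by cotensors, followed by the unit of $D^*\dashv D$ --- is precisely the natural strategy, and is the one carried out in that reference.

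A couple of small corrections are in order. First, the diagram $j\mapsto X^{\Delta^j}$ is not a cosimplicial object in $DG^+\Aff_R$: each $X^{\Delta^j}$ is itself a simplicial derived affine, and the assignment is contravariant in $j$, so you get a simplicial object in $sDG^+\Aff_R$. This does not affect the argument, since what matters is that $j\mapsto \map(X^{\Delta^j},F)$ is a cosimplicial simplicial set and that each $X^{\Delta^j}\to X^{\Delta^0}=X$ is a weak equivalence of simplicial presheaves (as $\Delta^j$ is contractible and $X$ is Reedy fibrant). That gives your step~1 directly.

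Your identification of the main obstacle is exactly right: the whole content lies in knowing that the unit $O(X^{\Delta^j})\to DD^*O(X^{\Delta^j})$ is a levelwise weak equivalence for Reedy cofibrant cosimplicial chain CDGAs. The left Quillen property cited from \cite[Lemma \ref{poisson-Dstarlemma}]{poisson} alone does not give this; what you need is that $D^*\dashv D$ is a Quillen \emph{equivalence} between cosimplicial chain CDGAs and stacky CDGAs in non-negative cochain degrees. This holds over $\Q$ --- it is the cosimplicial incarnation of the monoidal Dold--Kan correspondence for commutative algebras --- and is established in \cite{ddt1}. Once that is in hand, Reedy cofibrancy of $O(X^{\Delta^j})$ (which follows from Reedy fibrancy of $X$) activates the derived unit, and the rest of your argument goes through as written.
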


%


Lemma \ref{gooddescent} and  \cite[Proposition \ref{poisson-tgtcor2}]{poisson} ensure that if a morphism $X \to Y$ of DG Artin $N$-hypergroupoids becomes an equivalence on hypersheafifying, then $D^*O(Y) \to D^*O(X)$ is formally \'etale in the sense of Lemma \ref{bicalcTlemma2}. In particular this means that the maps $\pd^i \co D^*O(X^{\Delta^j}) \to D^*O(X^{\Delta^{j+1}})$ and $\sigma^i \co   D^*O(X^{\Delta^{j+1}})\to D^*O(X^{\Delta^j})$ are formally \'etale. Thus $D^*O(X^{\Delta^{\bt}})$ can be thought of as a DM hypergroupoid in stacky CDGAs, and we may make the following definition:

\begin{definition}\label{biinftyFXdef}
Given a DG Artin $N$-hypergroupoid $X$ over $R$ and   any of the constructions $F$ based on $Q\cP$,
 write 
\[
 F(X):= \ho\Lim_{j \in \Delta} \oR F(D^* O(X^{\Delta^j})),
\]
for $\oR F$ as in \S \ref{Artindiagsn}.
\end{definition}

The proof of \cite[Proposition \ref{poisson-biinftyFXwell}]{poisson} shows that if $Y \to X$ is a trivial DG Artin hypergroupoid, then the morphism 
$ F(X) \to F(Y)$
is an equivalence for any of the constructions $F= \cP, \Comp, \PreSp$. Thus the following is well-defined:
\begin{definition}\label{QpoissdefX}
 Given a strongly quasi-compact DG Artin $N$-stack $\fX$ over $R$,  define the spaces  $Q\cP(\fX,0)$, $Q\Comp_w(\fX,0)$, $G\Sp(\fX,0)$  to be  the spaces
$
 Q\cP(X,0), \Comp_w(X,0), G\Sp(X,0)
$
for any DG Artin $N$-hypergroupoid $X$ with $X^{\sharp} \simeq \fX$.
\end{definition}

\begin{examples}
Examples of derived stacks $\fX$ with canonical $0$-shifted symplectic structures (elements of  $G\Sp(\fX,0)/G^1$) include the derived moduli stack  $\oR\Perf_S$ of perfect complexes on an algebraic $K3$ surface $S$, or the derived moduli stack $\oR\Loc_G(\Sigma)= \map(\Sigma,BG)$ of locally constant $G$-torsors on a compact oriented topological surface $\Sigma$, for an algebraic  group $G$ equipped with a Killing form on its Lie algebra. These both follow from \cite[\S 3.1]{PTVV}, with the symplectic form in the latter case coming from the $2$-shifted symplectic structure in  $\H^4(F^2\DR(BG))$, via the composition
\[
 F^2\DR(BG)\to F^2\DR(\oR\Loc_G(\Sigma) )\ten_{\Q}\oR\Gamma(\Sigma,\Q)\to  F^2\DR(\oR\Loc_G(\Sigma))[-2]
\]
of pullback along $\Sigma \by \oR\Loc_G(\Sigma) \to BG$  with Poincar\'e duality. 

When $\Sigma$ is the $2$-sphere, the Killing form gives an equivalence $\oR\Loc_G(\Sigma) \simeq T^*BG$, and  
for any derived Artin stack $\fY$, \cite{calaqueShiftedCotangent} gives a $0$-shifted symplectic structure on the derived cotangent stack $T^*\fY$. Example \ref{quantex} generalises to give canonical quantisations in $  Q\cP(T^*\fY,0)$, defined  in terms of differential operators. For explicit hypergroupoid resolutions of $T^*BG$,  the   stacky CDGAs $\oL D^* O((T^*BG)^{\Delta^j})$ featuring in our definition of Poisson structures are just given by $O(T^*[G^j/\g^{j+1}])$ in the notation of Example \ref{quantex}. 
\end{examples}

\begin{proposition}\label{Perprop2}
For any strongly quasi-compact DG Artin $N$-stack $\fX$ over $R$,   there is a natural map  from $Q\cP(\fX,0)$ to the space of curved $A_{\infty}$ deformations $(\per_{dg}(\fX)\llbracket \hbar \rrbracket, \{m^{(i)}\}_{i\ge 0})$ of the dg category $\per_{dg}(\fX) $ of perfect $\O_{\fX}$-complexes.

This restricts to a map from $Q\cP(\fX,0)^{sd}$  to the space of anti-involutive curved $A_{\infty}$ deformations of $\per_{dg}(\fX) $.
\end{proposition}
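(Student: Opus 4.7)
The plan is to globalise Proposition \ref{Perprop} by the same hypergroupoid descent mechanism used in Definition \ref{biinftyFXdef}. Choose a DG Artin $N$-hypergroupoid $X$ with $X^\sharp \simeq \fX$, so that by construction
\[
 Q\cP(\fX,0) \simeq \ho\Lim_{j \in \Delta} \oR Q\cP(D^*O(X^{\Delta^j}),0).
\]
First I would promote the assignment of Proposition \ref{Perprop} to a morphism of $\infty$-functors on the subcategory of $\oL DG^+dg\CAlg(R)^{\et}$ satisfying Assumption \ref{biCDGAprops}, in exactly the manner of \S\ref{Artindiagsn}. Naturality under étale maps $A \to B$ is clear: restriction of bi-dg categories as in Definition \ref{Perdef} gives a functor $\per_{dg}(A) \to \per_{dg}(B)$ intertwining the Hochschild Maurer--Cartan data via the natural equaliser morphism $\CCC^{\bt}_R(A) \to \CCC^{\bt}_R(B)$, and the filtration $F$ is strictly preserved.

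Taking the homotopy limit over $j \in \Delta$ then produces a natural map from $Q\cP(\fX,0)$ to $\ho\Lim_j \mathrm{Def}(\per_{dg}(D^*O(X^{\Delta^j})))$, where $\mathrm{Def}$ denotes curved $A_{\infty}$ deformations with $\hbar^{i-1}\mid m^{(i)}$ for $i \ge 3$. The descent step identifies this target with curved $A_{\infty}$ deformations of $\per_{dg}(\fX)$ itself, via the equivalence
\[
 \per_{dg}(\fX) \simeq \ho\Lim_{j \in \Delta} \per_{dg}(D^*O(X^{\Delta^j})),
\]
which follows from smooth descent for perfect complexes on derived Artin stacks (applicable by Theorem \ref{dbigthm}) combined with the observation that $D^*$ encodes precisely the stacky Chevalley--Eilenberg thickening, so $\per_{dg}$ of the stacky CDGA captures homotopy Cartesian modules over the smooth atlas. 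Compatibility of curved $A_{\infty}$ deformation theory with these homotopy limits reduces to the same Morita/homotopy-Cartesian input used in the proof of Proposition \ref{Perprop}, applied levelwise.

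For the self-dual refinement, all the ingredients involved in the preceding descent — the $\hat{\HHom}$ and $\hat{\Tot}$ functors, the equaliser construction of $\CCC^{\bt}_R$ for diagrams, and the restrictions along formally étale maps — are manifestly compatible with the order-reversing involution $i$ of Lemma \ref{involutiveHH}. On the target side, involutive curved $A_{\infty}$ deformations of $\per_{dg}(\fX)$ are equipped with the equivalence $\tilde{\cP}_{-\hbar} \simeq \tilde{\cP}_{\hbar}^{\opp}$ lifting the duality $\cHom_{\O_\fX}(-,\O_\fX)$ on perfect complexes, and the constructed map sends the involution $*$ on $Q\cP$ to precisely this involution, so restriction to $*$-fixed points yields the second assertion.

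\textbf{Main obstacle.} The chief technical difficulty is the descent identification $\per_{dg}(\fX) \simeq \ho\Lim_j \per_{dg}(D^*O(X^{\Delta^j}))$ together with the corresponding statement for curved $A_{\infty}$ deformation spaces: one must check that descent for perfect complexes on derived Artin stacks is genuinely compatible with the stacky CDGA formalism encoded by $D^*$, and that the deformation functor on dg categories behaves well under the resulting cosimplicial homotopy limits. Once these compatibilities are secured — essentially by the same reasoning showing $Q\cP(\fX,0)$ is well defined — everything else is formal bookkeeping with the filtrations $F$ and $\tilde{F}$.
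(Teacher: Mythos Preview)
Your approach is essentially the same as the paper's: choose a hypergroupoid resolution, apply Proposition \ref{Perprop} levelwise, and use the descent identification $\per_{dg}(\fX)\simeq\ho\Lim_{j}\per_{dg}(D^*O(X^{\Delta^j}))$ together with the defining equivalence for $Q\cP(\fX,0)$. The paper handles your ``main obstacle'' by citing external results (\cite[Proposition \ref{stacks2-qcohequiv}]{stacks2} and \cite[Lemma \ref{poisson-denormmod}, Corollary \ref{poisson-gooddescent}]{poisson}) rather than arguing from scratch, and treats the self-dual statement as immediate; otherwise your outline matches.
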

\begin{proof}
  Combining \cite[Proposition \ref{stacks2-qcohequiv}]{stacks2} with \cite[Lemma \ref{poisson-denormmod} and Corollary \ref{poisson-gooddescent}]{poisson} and choosing a derived Artin $N$-hypergroupoid $X$ representing $\fX$, we have 
\[
 \per_{dg}(\fX) \simeq \ho \Lim_{j\in\Delta} \per_{dg}(D^*O(X^{\Delta^j})).
\]
By definition, $ Q\cP(\fX,0)\simeq \ho \Lim_{j\in\Delta}Q\cP(D^*O(X^{\Delta^j},0)  $, so the existence of the map follows from  Proposition \ref{Perprop}. The second statement is an immediate consequence.
\end{proof}

Adapting \cite[Definition \ref{DQvanish-nondegstack}]{DQvanish} to unshifted structures gives:
\begin{definition}\label{nondegstack}
 Given a Poisson structure $\pi \in \cP(\fX,0)$, we say that $\pi$ is non-degenerate if the induced map
\[
 \pi^{\sharp} \co \oL\Omega^1_{\fX} \to \oR\cHom_{\sO_X}(\oL\Omega^1_{\fX}, \sO_{\fX})
\]
is a quasi-isomorphism of sheaves on $\fX$, and $\oL\Omega^1_{\fX} $ is perfect.
\end{definition}

Combined with  the results   above, an immediate consequence of the generalisation of Propositions \ref{QcompatP1} and \ref{quantprop} and Theorem \ref{quantpropsd} in \S \ref{Artindiagsn} is: 
  \begin{theorem}\label{Artinthm}
For any strongly quasi-compact DG Artin $N$-stack $\fX$ over $R$, and  any $w \in \Levi_{\GT}(\Q)$, there are canonical weak equivalences 
\begin{align*}
Q\Comp_w(\fX,0)^{\nondeg} &\to  Q\cP_w(\fX,0)^{\nondeg}\simeq  Q\cP(\fX,0)^{\nondeg} \\
 Q\cP_w(\fX,0)^{\nondeg}/G^j &\to ( Q\cP_w(\fX,0)^{\nondeg}/G^2)\by^h_{(G\Sp(\fX,0)/G^2)}(G\Sp(\fX,0)/G^j) \\ 
  Q\cP_w(\fX,0)^{\nondeg,sd} &\simeq \cP(\fX,0)^{\nondeg} \by \mmc(\hbar^2 \DR(\fX)\llbracket\hbar^2\rrbracket[1]).
\end{align*}
\end{theorem}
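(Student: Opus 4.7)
The plan is to reduce the three global weak equivalences to the stacky CDGA-level results already established, using the hypergroupoid presentation and the $\infty$-functorial enhancements set up in \S \ref{Artindiagsn}. Concretely, fix a DG Artin $N$-hypergroupoid $X$ with $X^{\sharp}\simeq\fX$, and set $A^{\bt}:=D^*O(X^{\Delta^{\bt}})$. By Definition \ref{biinftyFXdef}, each of the spaces $Q\cP(\fX,0)$, $Q\Comp_w(\fX,0)$, $G\Sp(\fX,0)$ and their variants is $\ho\Lim_{j\in\Delta}$ of the corresponding construction applied to the stacky CDGAs $A^j$, and by the discussion just after Lemma \ref{gooddescent} all structure maps $\pd^i, \sigma^i$ are formally \'etale in the sense of Lemma \ref{bicalcTlemma2}.

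Next, the excerpt has already observed that Propositions \ref{QcompatP1}, \ref{quantprop} and Theorem \ref{quantpropsd}, together with Lemma \ref{bicalcTlemma2}, upgrade to weak equivalences of $\infty$-functors $\oR Q\Comp_w(-,0)^{\nondeg}\to \oR Q\cP_w(-,0)^{\nondeg}$, etc., defined on the subcategory of $\oL DG^+dg\CAlg(R)^{\et}$ consisting of objects satisfying Assumption \ref{biCDGAprops}. Since the $A^j$ lie in this subcategory and form a formally \'etale cosimplicial diagram, one simply applies $\ho\Lim_{j\in\Delta}$ to each of the three equivalences at the level of $\infty$-functors to obtain the corresponding equivalence globally on $\fX$. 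The identifications $G\Sp(\fX,0)\simeq \ho\Lim_{j} G\Sp(A^j,0)$ and $\DR(\fX/R)\simeq \ho\Lim_j \DR(A^j/R)$ required to interpret the right-hand sides are standard descent statements from \cite[\S \ref{poisson-bidescentsn}]{poisson}, and the homotopy fibre product in the second equivalence commutes with $\ho\Lim_\Delta$, so the factorisation through $G\Sp(\fX,0)/G^2$ is preserved.

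The two extra conditions imposed by the statement are non-degeneracy and self-duality, and these descend cleanly. Non-degeneracy is the condition that the induced map on cotangent complexes is a quasi-isomorphism of perfect complexes; Lemma \ref{bicalcTlemma2} shows that formally \'etale morphisms preserve $\hat{\HHom}_A(\Omega^k,A)$, so non-degeneracy of $\Delta\in\ho\Lim_j Q\cP(A^j,0)$ is detected in any single level and equivalent to the stacky definition (Definition \ref{nondegstack}) for $\fX$. Self-duality is the strict fixed-point condition for the involution $*$ of Lemma \ref{involutiveHH}, which is manifestly functorial in the stacky CDGA, so commutes with $\ho\Lim_\Delta$; in particular Lemma \ref{filtsd} applies levelwise and passes to the homotopy limit.

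The main obstacle is really just keeping track of the filtrations $\tilde{F}$ and $G$ under homotopy limits: the inductive obstruction-theory arguments of Propositions \ref{QcompatP1} and \ref{quantprop} and of Theorem \ref{quantpropsd} proceed through fibre sequences of complete filtered DGLAs, and one must ensure that the homotopy limits $\ho\Lim_\Delta$ preserve these fibre sequences level by level and are compatible with the towers $\{Q\cP(-,0)/G^k\}_k$ and $\{Q\cP(-,0)/\tilde{F}^p\}_p$. This is standard since $\ho\Lim$ preserves finite homotopy limits and sequential inverse limits of towers of fibrations, so no new argument is needed beyond the bookkeeping already implicit in \S \ref{Artindiagsn}. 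The theorem is then a formal consequence.
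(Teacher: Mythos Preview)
Your proposal is correct and follows exactly the approach the paper takes: the paper treats Theorem \ref{Artinthm} as an immediate consequence of the $\infty$-functorial enhancements of Propositions \ref{QcompatP1}, \ref{quantprop} and Theorem \ref{quantpropsd} set up in \S \ref{Artindiagsn}, combined with Definition \ref{biinftyFXdef}. You have simply spelled out the details of that deduction --- hypergroupoid presentation, passage to $\ho\Lim_{\Delta}$, and compatibility of non-degeneracy, self-duality, and the filtrations with the homotopy limit --- which the paper leaves implicit.
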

This establishes the existence of $0$-shifted deformation quantisations as conjectured in \cite[\S 5.3]{toenICM}, bypassing \cite[Conjecture 5.3]{toenICM} which would also allow quantisation of degenerate Poisson structures (but see Remark \ref{nonfedosovrmk}). 

\begin{remarks}
 As in Remark \ref{quantLevirmk}, instead of choosing one Levi decomposition, we could work with the space $ \oR \Levi_{\GT}(R)$ of $R$-linear Levi decompositions, and the proof of Theorem \ref{Artinthm} then gives equivalences
\begin{align*}
&\oR \Levi_{\GT}(R) \by   Q\cP(\fX,0)^{\nondeg}/G^j\\
 &\to \oR \Levi_{\GT}(R) \by  (Q\cP(\fX,0)^{\nondeg}/G^2)\by^h_{(G\Sp(\fX,0)/G^2)}(G\Sp(\fX,0)/G^j) ,\\ 
&\oR \Levi_{\GT}(R) \by Q\cP(\fX,0)^{\nondeg,sd}\\
 &\simeq \oR \Levi_{\GT}(R) \by \cP(\fX,0)^{\nondeg} \by \mmc(\hbar^2 \DR(\fX)\llbracket\hbar^2\rrbracket[1])
\end{align*}
over $\oR \Levi_{\GT}(R)$. 

As in Remark \ref{oddcoeffsrmk}, the power series $w$-compatible with a quantisation, although determined by its even coefficients, might have odd coefficients unless $w(-1)=t$. The reasoning of  Remark \ref{quantsdLevirmk} gives a canonical equivalence
\[
 \oR \Levi_{\GT}^t(R) \by Q\cP(\fX,0)^{\nondeg,sd} \simeq \oR \Levi_{\GT}^t(R) \by \cP(\fX,0)^{\nondeg} \by \mmc(\hbar^2 \DR(\fX)\llbracket\hbar^2\rrbracket[1])
\]
over $\oR \Levi_{\GT}^t(R)$ which does send a quantisation to its family of compatible de Rham power series.

As in Remark \ref{sdgerbermk}, self-dual quantisations of $\O_{\fX}$ also give rise to self-dual quantisations of all anti-involutive $\bG_m$-gerbes, and in particular of the Picard algebroid with anti-involution given by $\oR\hom_{\O_{\fX}}(-,\sL)$ for any line bundle $\sL$. 

Finally, applying \'etale descent to Proposition \ref{cfKT} shows that for a smooth DM stack $X$, the Kontsevich--Tamarkin quantisation $\phi_w(\pi)$ of any Poisson structure $\pi$ on $X$ is self-dual  whenever $w \in \Levi_{\GT}^t(k)$.  When $\pi$ is non-degenerate, the  quantisation $\phi_w(\pi)$ then corresponds under Theorem \ref{Artinthm} to the constant de Rham power series $\pi^{-1}$. 
\end{remarks}

\bibliographystyle{alphanum}
\bibliography{references.bib}

\newcommand{\etalchar}[1]{$^{#1}$}
\def\cprime{$'$}
\begin{thebibliography}{PTVV}

\bibitem[BK]{BezrukavnikovKaledin}
R.~Bezrukavnikov and D.~Kaledin.
\newblock Fedosov quantization in algebraic context.
\newblock {\em Mosc. Math. J.}, 4(3):559--592, 782, 2004.

\bibitem[BKP]{BlancKatzarkovPandit}
A.~{Blanc}, L.~{Katzarkov}, and P.~{Pandit}.
\newblock {Generators in formal deformations of categories}.
\newblock arXiv:1705.00655 [math.AG], 2017.

\bibitem[BN]{barnatan}
Dror Bar-Natan.
\newblock On associators and the {G}rothendieck-{T}eichmuller group. {I}.
\newblock {\em Selecta Math. (N.S.)}, 4(2):183--212, 1998.

\bibitem[Bra]{braunInvolutive}
Christopher Braun.
\newblock Involutive {$A_\infty$}-algebras and dihedral cohomology.
\newblock {\em J. Homotopy Relat. Struct.}, 9(2):317--337, 2014.
\newblock arXiv:1209.1261v2 [math.QA].

\bibitem[Cal]{calaqueShiftedCotangent}
Damien Calaque.
\newblock Shifted cotangent stacks are shifted symplectic.
\newblock arXiv:1612.08101 [math.AG], 2016.

\bibitem[CPT{\etalchar{+}}]{CPTVV}
D.~Calaque, T.~Pantev, B.~To{\"e}n, M.~Vaqui{\'e}, and G.~Vezzosi.
\newblock Shifted {P}oisson structures and deformation quantization.
\newblock {\em J. Topol.}, 10(2):483--584, 2017.
\newblock arXiv:1506.03699v4 [math.AG].

\bibitem[Del]{deligneDefFnsSymplectic}
P.~Deligne.
\newblock D\'eformations de l'alg\`ebre des fonctions d'une vari\'et\'e
  symplectique: comparaison entre {F}edosov et {D}e {W}ilde, {L}ecomte.
\newblock {\em Selecta Math. (N.S.)}, 1(4):667--697, 1995.

\bibitem[Dri]{drinfeldQuasitriangular}
V.~G. Drinfel{\cprime}d.
\newblock On quasitriangular quasi-{H}opf algebras and on a group that is
  closely connected with {${\rm Gal}(\overline{\bf Q}/{\bf Q})$}.
\newblock {\em Algebra i Analiz}, 2(4):149--181, 1990.

\bibitem[DWL]{DeWildeLecomte}
Marc De~Wilde and Pierre B.~A. Lecomte.
\newblock Existence of star-products and of formal deformations of the
  {P}oisson {L}ie algebra of arbitrary symplectic manifolds.
\newblock {\em Lett. Math. Phys.}, 7(6):487--496, 1983.

\bibitem[Fed]{Fedosov}
Boris~V. Fedosov.
\newblock A simple geometrical construction of deformation quantization.
\newblock {\em J. Differential Geom.}, 40(2):213--238, 1994.

\bibitem[Gol]{goldmanInvartFns}
William~M. Goldman.
\newblock Invariant functions on {L}ie groups and {H}amiltonian flows of
  surface group representations.
\newblock {\em Invent. Math.}, 85(2):263--302, 1986.

\bibitem[Hin]{hinstack}
Vladimir Hinich.
\newblock D{G} coalgebras as formal stacks.
\newblock {\em J. Pure Appl. Algebra}, 162(2-3):209--250, 2001.

\bibitem[HM]{Levi}
G.~Hochschild and G.~D. Mostow.
\newblock Pro-affine algebraic groups.
\newblock {\em Amer. J. Math.}, 91:1127--1140, 1969.

\bibitem[Hum]{humphreysLevi}
J.~E. Humphreys.
\newblock Existence of {L}evi factors in certain algebraic groups.
\newblock {\em Pacific J. Math.}, 23:543--546, 1967.

\bibitem[Isa]{isaksenStrict}
Daniel~C. Isaksen.
\newblock Strict model structures for pro-categories.
\newblock In {\em Categorical decomposition techniques in algebraic topology
  ({I}sle of {S}kye, 2001)}, volume 215 of {\em Progr. Math.}, pages 179--198.
  Birkh\"auser, Basel, 2004.
\newblock arXiv:0108189[math.AT].

\bibitem[Kon1]{kontsevichOperads}
Maxim Kontsevich.
\newblock Operads and motives in deformation quantization.
\newblock {\em Lett. Math. Phys.}, 48(1):35--72, 1999.
\newblock Mosh{\'e} Flato (1937--1998).

\bibitem[Kon2]{kontsevichDQAlgVar}
Maxim Kontsevich.
\newblock Deformation quantization of algebraic varieties.
\newblock {\em Lett. Math. Phys.}, 56(3):271--294, 2001.
\newblock EuroConf{\'e}rence Mosh{\'e} Flato 2000, Part III (Dijon).

\bibitem[Kon3]{kontsevichPoisson}
Maxim Kontsevich.
\newblock Deformation quantization of {P}oisson manifolds.
\newblock {\em Lett. Math. Phys.}, 66(3):157--216, 2003.

\bibitem[LdB]{LowenvdBerghCurvature}
Wendy Lowen and Michel~Van den Bergh.
\newblock The curvature problem for formal and infinitesimal deformations.
\newblock arXiv:1505.03698, 2015.

\bibitem[Pet]{petersenGTformality}
Dan Petersen.
\newblock Minimal models, {GT}-action and formality of the little disk operad.
\newblock {\em Selecta Math. (N.S.)}, 20(3):817--822, 2014.
\newblock arXiv:1303.1448v1 [math.AT].

\bibitem[Pri1]{htpy}
J.~P. Pridham.
\newblock Pro-algebraic homotopy types.
\newblock {\em Proc. London Math. Soc.}, 97(2):273--338, 2008.
\newblock arXiv math.AT/0606107 v8.

\bibitem[Pri2]{ddt1}
J.~P. Pridham.
\newblock Unifying derived deformation theories.
\newblock {\em Adv. Math.}, 224(3):772--826, 2010.
\newblock arXiv:0705.0344v6 [math.AG], corrigendum 228 (2011), no. 4,
  2554--2556.

\bibitem[Pri3]{stacksintro}
J.~P. Pridham.
\newblock Notes characterising higher and derived stacks concretely.
\newblock arXiv:1105.4853v3 [math.AG], 2011.

\bibitem[Pri4]{stacks2}
J.~P. Pridham.
\newblock Presenting higher stacks as simplicial schemes.
\newblock {\em Adv. Math.}, 238:184--245, 2013.
\newblock arXiv:0905.4044v4 [math.AG].

\bibitem[Pri5]{DQvanish}
J.~P. Pridham.
\newblock Deformation quantisation for {$(-1)$}-shifted symplectic structures
  and vanishing cycles.
\newblock arXiv:1508.07936v2 [math.AG], 2015.

\bibitem[Pri6]{DQpoisson}
J.~P. Pridham.
\newblock Quantisation of derived {P}oisson structures.
\newblock arXiv: 1708.00496v2 [math.AG], 2017.

\bibitem[Pri7]{poisson}
J.~P. Pridham.
\newblock Shifted {P}oisson and symplectic structures on derived {$N$}-stacks.
\newblock {\em J. Topol.}, 10(1):178--210, 2017.
\newblock arXiv:1504.01940v5 [math.AG].

\bibitem[PTVV]{PTVV}
T.~Pantev, B.~To{\"e}n, M.~Vaqui{\'e}, and G.~Vezzosi.
\newblock Shifted symplectic structures.
\newblock {\em Publ. Math. Inst. Hautes \'Etudes Sci.}, 117:271--328, 2013.
\newblock arXiv: 1111.3209v4 [math.AG].

\bibitem[Tam]{tamarkinOperadicKontsevichFormality}
Dmitry~E. Tamarkin.
\newblock {\em Operadic proof of {M}. {K}ontsevich's formality theorem}.
\newblock ProQuest LLC, Ann Arbor, MI, 1999.
\newblock Thesis (Ph.D.)--The Pennsylvania State University.

\bibitem[To{\"e}]{toenICM}
Bertrand To{\"e}n.
\newblock Derived algebraic geometry and deformation quantization.
\newblock In {\em Proceedings of the {I}nternational {C}ongress of
  {M}athematicians (Seoul 2014), Vol. II}, pages 769--752, 2014.
\newblock arXiv:1403.6995v4 [math.AG].

\bibitem[VdB]{vdBerghGlobalDQ}
Michel Van~den Bergh.
\newblock On global deformation quantization in the algebraic case.
\newblock {\em J. Algebra}, 315(1):326--395, 2007.

\bibitem[Vor]{voronovHtpyGerstenhaber}
Alexander~A. Voronov.
\newblock Homotopy {G}erstenhaber algebras.
\newblock In {\em Conf\'erence {M}osh\'e {F}lato 1999, {V}ol. {II} ({D}ijon)},
  volume~22 of {\em Math. Phys. Stud.}, pages 307--331. Kluwer Acad. Publ.,
  Dordrecht, 2000.
\newblock arXiv:9908040 [math.QA].

\bibitem[Wei]{W}
Charles~A. Weibel.
\newblock {\em An introduction to homological algebra}.
\newblock Cambridge University Press, Cambridge, 1994.

\bibitem[Yek]{YekutieliDQAG}
Amnon Yekutieli.
\newblock Deformation quantization in algebraic geometry.
\newblock {\em Adv. Math.}, 198(1):383--432, 2005.

\end{thebibliography}
\end{document}